\newtheorem{theorem}[subsection]{Theorem}
\newtheorem{lemma}[subsection]{Lemma}
\newtheorem{proposition}[subsection]{Proposition}
\newtheorem{remark}[subsection]{Remark}
\newcommand\RRR{\mathbb{R}}
\newcommand\ZZZ{\mathbb{Z}}
\newcommand{\NNN}{\mathbb{N}}
\newcommand{\id}{\mathrm{id}}
\renewcommand{\mod}{\mathrm{mod}\,}
\newcommand{\DMX}{\mathcal{D}(M,X)}
\newcommand{\SfX}{\mathcal{S}(f,X)}
\newcommand{\OfX}{\mathcal{O}(f,X)}
\newcommand{\DidMX}{\mathcal{D}_{\id}(M,X)}
\newcommand{\SidfX}{\mathcal{S}_{\id}(f,X)}
\newcommand{\OffX}{\mathcal{O}_{f}(f,X)}
\newcommand{\Off}{\mathcal{O}_f(f)}
\newcommand{\supp}{\mathrm{supp}}
\newcommand{\DM}{\mathcal{D}(M)}
\newcommand{\Sf}{\mathcal{S}(f)}
\newcommand{\inj}{\hookrightarrow}
\newcommand{\surj}{\twoheadrightarrow}
\newcommand{\xsurj}{\ar@{->>}[r]}
\newcommand{\xinj}{\ar@{^{(}->}[r]}
\begin{document}
\title{Deformations of circle-valued  functions on $2$-torus}
\author{Bohdan Feshchenko}
\address{Topology laboratory, Department of algebra and topology, Institute of Mathematics of National Academy of Science of Ukraine,
Tereshchenkivska, 3, Kyiv, 01601, Ukraine}
\email{fb@imath.kiev.ua}
\subjclass[2010]{}
\keywords{Circle-valued Morse functions, orbits, stabilizers, fundamental groups}
\date{\today}

\maketitle

\begin{abstract}
	In this paper we give an algebraic description of  fundamental groups of orbits of circle-valued smooth functions from some subspace of the space of smooth functions with isolated singularities on $2$-torus $T^2$ with respect to the action of the group of diffeomorphisms of $T^2$. 
\end{abstract}

\section{Introduction}\label{sec:Introduction}
Morse functions on manifolds are one of the main objects in mathematics nowadays. It is well-known that  analytic properties of such functions  carry information about the geometry and topology of the manifold on which they are defined \cite{Milnor:MorseTheory}.

Deformational properties of Morse functions were studied by many authors. For example,
homotopy properties of connected components of spaces of Morse functions on smooth surfaces were studied by V.~Sharko \cite{Sharko:PrIntMat:1998}, H.~Zieschang, S.~Matveev, E.~Kudryavtseva \cite{Kudryavtseva:MatSb:1999}.
Cobordism groups of Morse functions on surfaces were calculated by К.~Ikegami, О.~Saeki \cite{IkegamiSaeki:JMSJap:2003} and B.~Kalmar \cite{Kalmar:KJM:2005}.

The paper is devoted to the study of  circle-valued smooth functions from some subspace of smooth functions  with isolated singularities on smooth  compact oriented surfaces and homotopy properties of special subspaces of such functions called orbits. The main example of such functions is circle-valued Morse functions which are
natural generalizations of (ordinary) Morse  functions. 
Circle-valued Morse functions can be viewed locally as  functions but global properties of such functions are different from real-valued case.
A modern theory of circle-valued Morse  functions originates in a series of papers \cite{Novikov:1981:DAN, Novikov:1982:UMN} by S.~Novikov in 80's. It  was motivated by the study of multi-valued Lagrangians in some problems of theoretical physics and leaded him to develop a generalization of a Morse  theory  for circle-valued Morse functions and more generally a theory (now called Morse-Novikov theory) for differential $1$-forms. 
This theory has many applications, e.g., in questions of fibrations of manifolds over $S^1$ 
\cite{Ranicki:1995:T},  Lagrangian intersections \cite{FukayaOhOhtaOno:2009AMS}, knot theory \cite{VeberPazhitnovRudolf:2001:PANAA},  Seiberg-Witten  theory \cite{HutchingsLee:1999:GT, HutchingsLee:1999:T}, etc. The reader can find more on this theory and its applications in the book by A.~Pajitnov \cite{Pajitnov:2006:Gruyter}.

Recall that there is a natural action of the group of 
diffeomorphisms $\mathcal{D}(M)$ of a smooth compact surface $M$   on the space of smooth $P$-valued functions for $P = \RRR$ or $S^1$ given  by the rule:
$$
\gamma:C^{\infty}(M,P)\times \mathcal{D}(M)\to C^{\infty}(M,P),\quad \gamma(f,h) = f\circ h.
$$
We consider stabilizers $\mathcal{S}(f)$ and orbits $\mathcal{O}(f)$ of a smooth function $f\in C^{\infty}(M,P)$  with respect to the action $\gamma$   and their connected components $\mathcal{S}_{\id}(f)$ and $\mathcal{O}_f(f)$ of the identity map $\id_M$ and a connected component containing $f$ respectively in topologies induced from strong Whitney topologies on $\mathcal{D}(M)$ and $C^{\infty}(M,P)$ (see definitions in subsection \ref{subsec:Orbits-Stabilizers}).

We restrict out attention on the class of smooth  $P$-valued functions $\mathcal{F}$ on a smooth compact surface which satisfies two conditions: functions take constant values on each boundary component and near every critical point can be represented as homogeneous polynomial $\RRR^2\to \RRR$ of degree $\geq 2$ without multiple factors (class of functions $\mathcal{F}$, see subsection \ref{subsec:class-F}). It is well-known that the class $\mathcal{F}$  consists of ``generic'' functions with ``topologically generic'' singularities(\cite[Section 3,4]{Maksymenko:2021:review}, subsection \ref{subsec:class-F}). Thus our restriction is insignificant, since the class $\mathcal{F}$ is ``wide'' enough.

S. Maksymenko \cite{Maksymenko:AGAG:2006, Maksymenko:MFAT:2010, Maksymenko:ProcIM:ENG:2010, Maksymenko:UMZ:ENG:2012, Maksymenko:DefFuncI:2014} showed that if $f:M\to P$ is a smooth function from $\mathcal{F}$ and $f$ has at least one saddle point, then $\pi_n\mathcal{O}_f(f) = \pi_nM$ for $n\geq 3$, $\pi_2\mathcal{O}_f(f) = 0$ and for $\pi_1\mathcal{O}_f(f)$ there is a short exact\footnote{Throughout the text $\inj$ and $\surj$ mean mono- and epimorphism respectively.} sequence of groups
\begin{equation}
	\xymatrix{
		\pi_1\mathcal{D}_{\id}(M) \ar@{^{(}->}[r]^{\zeta_1} & \pi_1\mathcal{O}_f(f) \ar@{->>}[r]^{\partial_1} & \pi_0\mathcal{S}'(f), 
	}
	\tag{2.3}
\end{equation}
where $\mathcal{S}'(f)$ is the group of  $f$-preserving diffeomorphisms of $M$ which are isotopic to the identity map, see Eq. \eqref{eq:S'}.
This exact sequence is a non-trivial part of long exact sequence of homotopy groups of some fibration $\zeta_f:\mathcal{D}_{\id}(M)\to \mathcal{O}_f(f)$ (Theorem \ref{thm:homotopy-gneral}).
As the consequence if $M$ is aspherical surface then all homotopy information is contained in $\pi_1\mathcal{O}_f(f)$, and the orbit  $\mathcal{O}_f(f)$ itself is an Eilenberg–MacLane space $K(\pi_1\mathcal{O}_f(f),1).$ 
So there are two natural questions about homotopy properties of $\mathcal{O}_f(f)$: description of an algebraic structure of groups $\pi_1\mathcal{O}_f(f)$ and homotopy type of $\mathcal{O}_f(f).$

An algebraic structure of $\pi_1\mathcal{O}_f(f)$ ``partially'' depends on homotopy properties of $\mathcal{D}_{\id}(M)$ which were studied in  \cite{EarleEells:BAMS:1967,EarleEells:DG:1970, EarleSchatz:DG:1970, Gramain:ASENS:1973}.
It is well-known that 
if $M$ is a closed compact and oriented surface of genus $\geq 2$, then $\mathcal{D}_{\id}(M)$ is contractible \cite{EarleEells:BAMS:1967,EarleEells:DG:1970, EarleSchatz:DG:1970, Gramain:ASENS:1973}, and so an epimorphism $\partial_1$ from \eqref{seq:homotopy-pi1} is an isomorphism. 
Thus in this case the question on an algebraic structure of $\pi_1\mathcal{O}_f(f)$ reduces to the study of an algebraic structure of groups $\pi_0\mathcal{S}'(f)$, which are easier to compute.

It is also known that the group $\mathcal{D}_{\id}(T^2)$ for $2$-torus is not contractible, so the image of $\pi_1\mathcal{D}_{\id}(T^2)\cong \ZZZ^2$ is non-trivial in $\pi_1\mathcal{O}_f(f)$.  The sequence \eqref{seq:homotopy-pi1} for functions on $T^2$ in general does not split \cite{MaksymenkoFeshchenko:2014:HomPropTree}. So for this case we need an additional study.

An algebraic structure of $\pi_1\mathcal{O}_f(f)$ for real-valued functions from the class $\mathcal{F}$ on $2$-torus $T^2$ was studied in the series of papers by S.~Maksymenko and the  author \cite{MaksymenkoFeshchenko:2014:HomPropCycle, MaksymenkoFeshchenko:2014:HomPropTree, MaksymenkoFeshchenko:2015:HomPropCycleNonTri, Feshchenko:2014:HomPropTreeNonTri}.
The sufficient conditions when sequence \eqref{seq:homotopy-pi1}  splits were proved in
\cite{MaksymenkoFeshchenko:2014:HomPropTree}. 
It was shown that the question on an algebraic structure of $\pi_1\mathcal{O}_f(f)$ is reduced to studying algebraic structure of orbits and stabilizers for  restrictions of a given function to subsurfaces of $T^2$ being 2-disks and cylinders, for which this structure is known \cite{Maksymenko:DefFuncI:2014}. These cylinders and $2$-disks are in some sense ``building blocks'' which carry  ``combinatorial symmetries'' of the function.

The group $\pi_0\mathcal{S}'(f)$ contains a lot of information about an algebraic structure of $\pi_1\mathcal{O}_f(f)$ and on homotopy type of $\mathcal{O}_f(f)$. Note that each diffeomorphism from $\mathcal{S}'(f)$ induces an automorphism of Kronrod-Reeb  graph $\Gamma_f$ of $f$. The group $\pi_0\mathcal{S}'(f)$ contains a free abelian subgroup generated by Dehn twists which induce trivial action on $\Gamma_f$ and the corresponding quotient group $G(f)$ contains ``discrete combinatorial'' symmetries of the function $f$. So the group $\pi_0\mathcal{S}'(f)$ can be viewed as ``homeotopy'' group for $f$-preserving and isotopic to the identity diffeomorphisms with $G(f)$ as its non-trivial counterpart. 

It is known that the group $G(f)$ ``controls'' the homotopy type of $\mathcal{O}_f(f)$.  In particular, if $f$ is generic, then the group $G(f)$ is trivial, and $\mathcal{O}_f(f)$ is homotopy equivalent to an $m$-torus $T^m$ if $M\neq S^2$, $M\neq \RRR P^2$,  to $S^2$ if $M = S^2$ and $f$ has only two critical points, and to $\mathrm{SO}(3)\times T^m$ otherwise, for some $m\geq 0$ depending on $f$. E.~Kudryavtseva \cite{Kudryavtseva:MathNotes:2012, Kudryavtseva:MatSb:2013} calculated the homotopy types of connected components of the space of Morse functions on compact surfaces and generalized the result on homotopy types of orbits $\mathcal{O}_f(f)$ when the group $G(f)$ is non-trivial.

It is also known that if $f:M\to P$ has exactly $n$ critical points, then $\Off$ is homotopy equivalent to some covering space of $n^{\mathrm{th}}$ configuration space of $M$ and $\pi_1\mathcal{O}_f(f)$ is a subgroup of $n^{\mathrm{th}}$ braid group $B_n(M)$ \cite[Theorem 2, Corollary 4]{Maksymenko:Travaux:2007}. 
A good overview  the reader can find in \cite{Maksymenko:2021:review,Maksymenko:DefFuncI:2014} where these results are presented in the form of so-called crystallographic and Bieberbach sequences.

{\bf Our main goal} is to generalize  our results on algebraic structure of $\pi_1\mathcal{O}_f(f)$ to the case of circle-valued  functions from the class $\mathcal{F}$ on $T^2.$ 

Notice that if $f:T^2\to S^1$ is a smooth function without critical points, then $f:T^2\to S^1$ is a locally trivial fibration and the homotopy types of orbits and stabilizers are known, see \cite[Theorem 1.9]{Maksymenko:AGAG:2006}, so we will always assume that all functions have at least one critical point.

\subsection*{General overview of results.} First we show (Theorem \ref{prop:circle-null-orbit}) if $f:M\to S^1$ is a null-homotopic function from $\mathcal{F}$ then $\mathcal{O}_f(f)$ is homeomorphic to $\mathcal{O}_{\tilde{f}}(\tilde{f})$, where $\tilde{f}:M\to \RRR$ is also a smooth function from $\mathcal{F}$ which is a lift of $f$ with respect to the universal cover $p:\RRR\to S^1.$ So for null-homotopic functions the problem in hand is completely reduced to the real-valued case, which is known.

Then we show that for circle-valued functions with isolated singularities on $2$-torus Kronrod-Reeb graphs are trees or contain a unique cycle, and if $f$ is not null-homotopic, then these graphs are not trees (Lemma \ref{lemma:Gamma_f-S^1}).

In our  main result (Theorem \ref{thm:main-th}) we use an algebraic construction  -- wreath product (subsection \ref{subsec:wreath}) -- to  describe  an algebraic structure of $\pi_1\mathcal{O}_f(f)$ for functions from $\mathcal{F}$ on $T^2$ whose graphs contain a cycle via groups  of connected components of stabilizers of the restrictions of a function $f$ onto cylinders.

It is well-known that $\pi_1\mathcal{D}_{\id}(T^2)$ is free abelian group of the rank $2$ (\cite{EarleEells:DG:1970, Gramain:ASENS:1973}); we define these two generators $\mathbf{L}$ and $\mathbf{M}$ adapted to some coordinate system on $T^2$ (see Eq. \eqref{eq:IsotopiesML}). The image of $\mathbf{L}$ in $\pi_1\mathcal{O}_f(f)$  plays an important role in the proof of Theorem \ref{thm:main-th}, but the image of $\mathbf{M}$ is ``invisible'' in our description of $\pi_1\mathcal{O}_f(f)$. Proposition \ref{theorem:kernel-phi-M} contains the information about  image of $\mathbf{M}$ in $\pi_1\mathcal{O}_f(f).$ 

The obtained results  will be used to further study the  orbits of smooth functions as well as their relationships with braid groups  on surfaces mentioned above or more generally Artin groups.

\subsection*{Structure of the paper.} In Section \ref{sec:Definitions} we give  definitions of stabilizers and orbits of smooth functions with values in $P$ on surfaces (subsection \ref{subsec:Orbits-Stabilizers}) and discuss their homotopy properties (Theorem \ref{thm:homotopy-gneral}) for functions from the class $\mathcal{F}$.

Section \ref{sec:Auxiliary} contains auxiliary constructions such as null-homotopic functions to $S^1$ and their orbits (subsection \ref{subsec:Null-homotopic}), topological properties of graphs of circle-valued functions with isolated singularities on $T^2$ (subsection \ref{subsec:Graphs}) and wreath product of special type (subsection \ref{subsec:wreath})  needed to state our main result. 

Then we state our main result -- Theorem \ref{thm:main-th} in Section \ref{sec:main-result}.
Section \ref{sec:Additional-construction-definitions} contains some additional constructions and result needed to the proof of Theorem \ref{thm:main-th} in Section \ref{sec:Proof-main}.

Finally, we study the ``place'' of the generator $\mathbf{M}$ of $\pi_1\mathcal{D}_{\id}(T^2)$ in $\pi_1\mathcal{O}_f(f)$ (Section \ref{sec:isitopyM}).

\section{Definitions and useful facts}\label{sec:Definitions}
\subsection{Orbits and stabilizers of smooth functions}\label{subsec:Orbits-Stabilizers}
Let $M$ be a smooth compact surface, $X$ be a closed (possible empty) subset of $M$. By $P$ we also denote $\RRR$ or $S^1$. The group $\DMX$ of diffeomorphisms of $M$ fixed on $X$ acts from the right on the space of smooth maps $C^{\infty}(M,P)$ by the  rule
$$
\gamma: C^{\infty}(M,P)\times \DMX\to C^{\infty}(M,P),\qquad \gamma(f,h) = f\circ h.
$$
With respect to $\gamma$ we denote by 
\begin{align*}
	\SfX &= \{ h\in \DMX\,|\, f\circ h = f\},\\
	\OfX &= \{f\circ h\,| h\in\DMX\}
\end{align*}
the {\it stabilizer} and the {\it orbit} of $f\in C^{\infty}(M,P).$ Endow strong Whitney $C^{\infty}$-topologies on $C^{\infty}(M,P)$ and $\DMX;$ then for a map $f\in C^{\infty}(M,P)$ these topologies induce some topologies on $\SfX$ and $\OfX$. We denote by $\DidMX$, $\SidfX$ connected components of the identity map $\DMX$ and $\SfX$ respectively, and by $\OffX$ a connected component of $\OfX$ containing $f$. If $X = \varnothing$ we omit the symbol ``$\varnothing$'' from our notation, i.e., we will write $\DM$ and $\Sf$ instead of $\mathcal{D}(M,\varnothing)$ and $\mathcal{S}(f,\varnothing)$ and so on. We also set
\begin{equation}\label{eq:S'}
	\mathcal{S}'(f,X) = \mathcal{S}(f,X)\cap\DidMX.
\end{equation}
The group $\mathcal{S}'(f,X)$ consists of isotopic to the identity $f$-preserving diffeomorphisms fixed on $X$.

\subsection{The space $\mathcal{F}(M,P)$}\label{subsec:class-F}
Let $\mathcal{F}(M,P)$ be a  subset of $C^{\infty}(M,P)$ satisfying the following conditions:
\begin{enumerate}
	\item the map $f$ takes constant values at each boundary component of $M$,
	\item for every critical point $z$ of $f$ there are local coordinates in which $f$ is a homogeneous polynomial $\RRR^2\to \RRR$ of degree $\geq 2$ without multiple factors.  
\end{enumerate}
Notice that the class $\mathcal{F}$ is ``natural'' and ``generic'' class of smooth functions.
It is well-know that every $f\in\mathcal{F}(M,P)$ has only isolated critical points, and thus the set of critical points of $f$ is finite. For instance the space of $P$-valued Morse functions, i.e., smooth maps $f:M\to P$ which has only non-degenerate critical points, satisfying condition (1) is a subspace of $\mathcal{F}(M,P).$ 
From the other hand, if a smooth function $f:M\to P$ has only isolated critical points, then by theorem proved by P.~T.~Church and J.~G.~Timourian \cite{ChurchTimourian} and independently by O.~Prishlyak \cite{Prishlyak:2002}, the local topological structure of level sets near any critical point can be realized by level sets of homogeneous polynomial without multiple factors.
So the space $\mathcal{F}(M,P)$ consists of ``generic'' maps with ``topologically generic'' critical points, see \cite[Section 3,4]{Maksymenko:2021:review}. 

\subsection{$f$-adapted manifolds}\label{subsec:f-adapted} 
To state the results about homotopy properties of orbits and stabilizers for functions from the class $M$ on surfaces we need the notion called $f$-adapted manifolds.

Let $f:M\to P$ be a smooth function from $\mathcal{F}$ on compact surface $M.$
A connected component of a level set $f^{-1}(c)$, $c\in P$ is also called a {\it leaf} of $f$. A leaf is called {\it regular} if it contains no critical points and {\it critical} otherwise.

Let $K$ be a (regular or critical) leaf of $f$. For $\varepsilon>0$ let $N_{\varepsilon}$ be a connected component of $f^{-1}[c-\varepsilon, c+\varepsilon]$ containing $K.$ Then $N_{\varepsilon}$ is called an {\it$f$-regular neighborhood} of $K$ if $\varepsilon$ is so small that $N_{\varepsilon}-K$ contains no critical points and no boundary components.

A submanifold $X\subset M$ is called {\it$f$-adapted} if $X = \cup_{i = 1}^a A_i$, where each $A_i$ is either a critical point of $f$, or  a regular leaf of $f$, or an $f$-regular neighborhood of some (regular or critical) leaf of $f$. Note that if $X$ is a $f$-adapted  subsurface, then $f|_X:X\to P$  belongs to $\mathcal{F}(X,P),$ see \cite{Maksymenko:2021:review}.

For a set $X$ we denote by $|X|$ the number of point in $X$; if $X$ is an infinite set we put $|X| = \infty$.

\subsection{Homotopy properties of orbits and stabilizers}

The following theorem describes the general homotopy properties of  orbits.
\begin{theorem}[\cite{Sergeraert:ASENS:1972,Maksymenko:AGAG:2006,Maksymenko:UMZ:ENG:2012,Maksymenko:OsakaJM:2011}]\label{thm:homotopy-gneral}
	Let $f\in\mathcal{F}(M,P)$ be a  function on a smooth compact surface $M$ and $X$ be an $f$-adapted submanifold. Then the following statements hold.
	\begin{enumerate}
		\item The map 
		\begin{equation}\label{eq:Serre}
			\zeta_f:\DMX\to \OfX,\qquad \zeta_f(h) = f\circ h
		\end{equation}
		is a locally trivial principal fibration with the fiber $\SfX$. The restriction $\zeta_f|_{\DidMX}:\DidMX\to \OffX$ is also a locally trivial principal fibration with the fiber $\mathcal{S}'(f,X).$
		The orbit $\mathcal{O}_f(f,X)$ is a Fr\`echet manifold, so it has a homotopy
		type of a CW complex.
		\item $\OffX =\mathcal{O}_f(f, X\cup \partial M)$,  and so $$\pi_k\OffX \cong \pi_k\mathcal{O}_f(f, X\cup \partial M)$$ for $k\geq 1.$
		\item Suppose that either $f$ has a saddle point or $M$ is non-orientable surface. Then $\mathcal{S}_{\id}(f)$ is contractible, $\pi_k\mathcal{O}_f(f) \cong \pi_kM,$ $k\geq 3$, $\pi_2\mathcal{O}_f(f) = 0$, and for $\pi_1\mathcal{O}_f(f)$ the following short sequence of groups
		\begin{equation}\label{seq:homotopy-pi1}
			\xymatrix{
				\pi_1\mathcal{D}_{\id}(M) \ar@{^{(}->}[r]^{\zeta_1} & \pi_1\mathcal{O}_f(f) \ar@{->>}[r]^{\partial_1} & \pi_0\mathcal{S}'(f)
			}
		\end{equation}
		is exact, where $\zeta_1$ is a homomorphism induced by $\zeta_f$ and $\partial_1$ is a boundary map for long exact sequence of the fibration $\zeta_f$.  Moreover, $p(\pi_1\mathcal{D}_{\id}(M))$ contains in the center of $\pi_1\mathcal{O}_f(f).$
		\item If 
		$
		\chi(M) < |X|
		$
		then $\pi_1\mathcal{D}_{\id}(M,X)$ is contractible (in particular, when $X$ is finite or when $\chi(M)<0$), $\pi_k\mathcal{O}_f(f,X) = 0$ for $k\geq 2$ and the boundary map 
		$$
		\xymatrix{
			\pi_1\mathcal{O}_f(f,X) \ar[r]^{\partial_1}_{\cong} &  \pi_0\mathcal{S}'(f,X)
		}
		$$
		is an isomorphism. 
		
	\end{enumerate}
\end{theorem}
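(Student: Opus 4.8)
This statement assembles several results from the cited literature, so my plan is to reduce everything to the long exact sequence of one fibration together with two ``rigidity'' facts, and I indicate the ingredients in the order I would use them. \emph{Part~(1).} The group $\DMX$ is a Fr\'echet Lie group acting smoothly on $C^\infty(M,P)$, with $\SfX$ the isotropy group of $f$ and $\OfX$ its orbit; the only substantial point is local triviality of $\zeta_f$, i.e. the existence of a continuous local section near $f$. Concretely, for each $g$ close to $f$ in $\OfX$ one must produce $h_g\in\DMX$ with $f\circ h_g=g$, depending continuously on $g$ in the strong $C^\infty$-topology and with $h_f=\id$. I would follow the scheme of Sergeraert, refined by S.~Maksymenko for the class $\mathcal F$: away from the finite critical set $f$ is a submersion, so the level sets of the homotopy $(1-s)f+sg$ can be transported by integrating a suitable time-dependent vector field; near each critical point one uses the homogeneous normal form from condition~(2) defining $\mathcal F(M,P)$ to run the same construction; a partition of unity assembles the pieces into $h_g$. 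Restricting the resulting principal $\SfX$-bundle over the component $\OffX$ gives the fibration $\DidMX\to\OffX$ with fibre $\mathcal S'(f,X)$, and an orbit of a smooth action of a Fr\'echet Lie group is a Fr\'echet manifold, hence has the homotopy type of a CW-complex.

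\emph{Part~(2).} By condition~(1) of $\mathcal F(M,P)$ the function $f$ is constant on each boundary circle, so that circle is a regular leaf admitting a collar on which $f$ is, in suitable coordinates, the projection $S^1\times[0,\varepsilon)\to[0,\varepsilon)$. If $h\in\DidMX$ then $h$ sends each boundary circle to itself and $h|_{\partial M}$ is a diffeomorphism of a disjoint union of circles isotopic to the identity; spreading that isotopy over the collar \emph{within the fibres of $f$} produces $g\in\DidMX$, supported near $\partial M$ and fixed on $X$, with $f\circ(h\circ g)=f\circ h$ and $h\circ g=\id$ on $\partial M$. Hence $\OffX=\mathcal O_f(f,X\cup\partial M)$ as subspaces of $\OfX$, and the stated isomorphism on $\pi_k$ is immediate.

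\emph{Parts~(3) and~(4).} Both are read off from the homotopy exact sequence of the fibration $\mathcal S'(f,X)\hookrightarrow\DidMX\to\OffX$ once the homotopy types of the two outer terms are known. For~(3), with $X=\varnothing$, the inputs are: (a) $\pi_2\DidM=0$ together with the known structure of $\pi_1\DidM$ for a compact surface (Hamstrom, Earle--Eells, Gramain), whence $\pi_k\DidM\cong\pi_kM$ for $k\ge3$; and (b) the deep theorem of S.~Maksymenko that $\Sidf$ is contractible when $f$ has a saddle point or $M$ is non-orientable, so $\pi_k\mathcal S'(f)=0$ for $k\ge1$ and $\pi_0\mathcal S'(f)$ is discrete. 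Feeding these into the exact sequence yields $\pi_k\Off\cong\pi_kM$ for $k\ge3$, $\pi_2\Off=0$, and the short exact sequence \eqref{seq:homotopy-pi1}, with $\zeta_1$ injective because $\pi_1\mathcal S'(f)=0$ and $\partial_1$ surjective because $\pi_0\DidM=0$. For~(4) the input is the classical contractibility of $\DidMX$ when $\chi(M)<|X|$ (equivalently $\chi(M\setminus X)<0$ for finite $X$; Earle--Eells, Gramain), together with Maksymenko's contractibility of $\mathcal S_{\id}(f,X)$; then the exact sequence gives $\pi_k\OffX\cong\pi_{k-1}\mathcal S'(f,X)$, which is $0$ for $k\ge2$ and $\pi_0\mathcal S'(f,X)$ for $k=1$, the isomorphism being $\partial_1$. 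Finally, for the centrality claim: given a loop $\sigma$ in $\DidM$ based at $\id$ and an arbitrary loop $\gamma$ in $\Off$ based at $f$, the formula $H(s,t)=\gamma(s)\circ\sigma(t)$ defines a continuous map $S^1\times S^1\to\Off$ whose restrictions to the two circle factors represent $[\gamma]$ and $\zeta_1[\sigma]$; since $\pi_1(S^1\times S^1)$ is abelian these two classes commute, so $\zeta_1(\pi_1\DidM)$ lies in the center of $\pi_1\Off$.

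\emph{Main obstacle.} Essentially all of the difficulty sits in the two cited rigidity facts invoked above: the local triviality of $\zeta_f$ in part~(1) and, above all, the contractibility of the identity components $\Sidf$ and $\mathcal S_{\id}(f,X)$ of the stabilizers under the hypotheses of~(3)--(4). Granting these, parts~(3) and~(4) reduce to bookkeeping with the long exact sequence.
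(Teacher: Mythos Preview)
The paper does not give its own proof of this theorem: it is stated as a compilation of results from the cited works of Sergeraert and Maksymenko, and the only text following the statement is a reminder of how the boundary map $\partial_1$ and the isomorphism $\varkappa$ are defined. Your proposal is therefore not competing with a proof in the paper but is supplying the outline that the paper leaves to the references, and in that role it is accurate: you correctly locate the substance in (i) the local section for $\zeta_f$ (Sergeraert's implicit function argument, adapted by Maksymenko to the class $\mathcal F$), (ii) the contractibility of $\mathcal S_{\id}(f)$ under the saddle/non-orientable hypothesis, and (iii) the classical homotopy type of $\mathcal D_{\id}(M,X)$, and then read everything else off the long exact sequence of the fibration. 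Your torus argument for centrality is a clean alternative to the usual Eckmann--Hilton style reasoning.

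One small point to be careful about in Part~(2): the issue is not merely that $h|_{\partial M}$ is isotopic to the identity as a diffeomorphism of circles, but that one must straighten $h$ to the identity on a neighbourhood of $\partial M$ \emph{within} $\mathcal D_{\id}(M,X)$ without changing $f\circ h$; the argument Maksymenko actually uses goes through the flow of a Hamiltonian-like vector field for $f$ on the collar, which is what guarantees the correction preserves the level sets of $f$. Your sentence ``spreading that isotopy over the collar within the fibres of $f$'' gestures at this, but the point deserves to be made precise, since an arbitrary collar isotopy need not respect $f$.
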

Notice that a sequence \eqref{seq:homotopy-pi1} is nonzero part of a long exact sequence of homotopy groups of the fibration $\zeta_f$. We briefly recall the definition of $\partial_1$. Let $\tilde{f}$ be a loop in $\mathcal{O}_f(f)$ based in $f:M\to P$, i.e., $\tilde{f}:[0,1]\to \mathcal{O}_f(f)$ with $\tilde{f}_0 = \tilde{f}_1 = f.$ Then by (1) of Theorem \ref{thm:homotopy-gneral} there exists a path $h:[0,1]\to \mathcal{D}_{\id}(M)$ such that 
$$
\tilde{f}_t = f\circ h_t,\qquad h_0 = \id_{M},\qquad h_1 = h\in\mathcal{S}'(f).
$$
We denote by $[\tilde{f}]$ and $[h]$ the corresponding homotopy classes $\tilde{f}$ and $h$ in $\pi_1\mathcal{O}_f(f)$ and $\pi_0\mathcal{S}'(f)$ respectively. Then the boundary homomorphism is defined by $\partial_1[\tilde{f}] = [h].$

Moreover there is an isomorphism
\begin{equation}\label{eq:varkappa}
	\varkappa:\pi_1(\mathcal{D}_{\id}(M),\mathcal{S}'(f))\longrightarrow \pi_1\mathcal{O}_f(f),\qquad \varkappa:[\{h_t\}]\longmapsto [\{f\circ h_t\}],
\end{equation}
where $h_t:M\to M$, $t\in [0,1]$ is an isotopy of $M$ with $h_0 = \id_{M}$ and $h_1\in\mathcal{S}'(f)$.

\section{Auxiliary constructions}\label{sec:Auxiliary}
\subsection{Null-homotopic functions from $\mathcal{F}(M,S^1)$ and their orbits}\label{subsec:Null-homotopic}
A function $f:M\to S^1$ homotopic to a constant map will be called {\it null-homotopic}. 

It is easy to see that homotopy properties of null-homotopic  functions from $\mathcal{F}(M,S^1)$ are the same as for its ``universal'' lift, an ordinary  function from $\mathcal{F}(M,\RRR)$ arising from a universal cover of $S^1$. To be more precise, consider a universal covering map $p:\RRR\to S^1$ given by $p(t) = e^{2\pi i t}$. Then by a lifting property for maps \cite[Proposition 1.33]{Hatcher:2002:Cambridge}, there exist a unique smooth function $\tilde{f}:M\to \RRR$ (up to a choice of appropriate triple of points) such that $p\circ \tilde{f} = f$. Note that the function $\tilde{f}$ is also a  function from $\mathcal{F}(M,\RRR)$. This function $\tilde{f}$  will be called a {\it universal lift} of $f$.
The fact that homotopy properties of $\mathcal{O}_{{f}}({f})$ are the same as for $\mathcal{O}_{\tilde{f}}(\tilde{f})$ follows from the proposition.
\begin{proposition}\label{prop:circle-null-orbit}
	Let $f\in\mathcal{F}(M,S^1)$ be a null-homotopic  function and $\tilde{f}:M\to \RRR$ be its universal lift. Then $\mathcal{O}(f)$ and 
	$\mathcal{O}(\tilde{f})$ are homeomorphic, and hence $\mathcal{O}_f(f) \cong \mathcal{O}_{\tilde{f}}(\tilde{f}).$
\end{proposition}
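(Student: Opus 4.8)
The plan is to construct an explicit homeomorphism between $\mathcal{O}(f)$ and $\mathcal{O}(\tilde f)$ and show it restricts to the required homeomorphism between the identity-type components. The key observation is that since $p\circ\tilde f = f$, for any $h\in\mathcal{D}(M)$ we have $f\circ h = p\circ(\tilde f\circ h)$, so $\tilde f\circ h$ is \emph{a} lift of $f\circ h$. The map $\tilde f\circ h$ is in fact the \emph{universal lift} of $f\circ h$ normalized by the same triple of points as $\tilde f$ (this uses that $f\circ h$ is still null-homotopic, which holds because $h$ is homotopic to $\id_M$ or at worst because $f$ null-homotopic forces $f\circ h$ null-homotopic as well). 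Thus I would define
\begin{equation}\label{eq:Phi-lift}
\Phi:\mathcal{O}(f)\longrightarrow \mathcal{O}(\tilde f),\qquad \Phi(f\circ h) = \tilde f\circ h,
\end{equation}
and its candidate inverse $\Psi(\tilde f\circ h) = p\circ(\tilde f\circ h) = f\circ h$.

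First I would check that $\Phi$ is well-defined: if $f\circ h_1 = f\circ h_2$ then I must show $\tilde f\circ h_1 = \tilde f\circ h_2$. This follows from uniqueness of lifts: both $\tilde f\circ h_1$ and $\tilde f\circ h_2$ lift the \emph{same} map $f\circ h_1 = f\circ h_2$, and they agree at the chosen normalizing point(s) (or on a chosen connected subset where the value is pinned down), so by the lifting uniqueness cited via \cite[Proposition 1.33]{Hatcher:2002:Cambridge} they coincide. Conversely $\Psi$ is well-defined and is a two-sided inverse of $\Phi$ essentially by construction, since $p\circ\tilde f\circ h = f\circ h$ on one side and the normalized lift of $p\circ(\tilde f\circ h)$ is $\tilde f\circ h$ on the other. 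So $\Phi$ is a bijection.

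Next I would verify continuity of $\Phi$ and $\Psi$ in the strong Whitney $C^{\infty}$-topologies. Continuity of $\Psi = p_*$ (postcomposition with the smooth covering map $p$) is immediate, since postcomposition with a fixed smooth map is continuous in Whitney topologies. For $\Phi$, continuity is the statement that the normalized lifting operation $g\mapsto \tilde g$ is continuous on the space of null-homotopic maps in $\mathcal{F}(M,S^1)$; locally, on a small Whitney neighborhood, $p$ is a diffeomorphism onto its image on each sheet, so lifting is just postcomposition with a local smooth inverse of $p$, hence continuous. I expect this continuity-of-lifting argument to be the main technical point — it is routine but needs the local structure of $p$ and the fact that a $C^\infty$-small perturbation of $g$ stays within a tubular neighborhood where a consistent sheet can be chosen.

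Finally, since $\Phi$ is a homeomorphism $\mathcal{O}(f)\cong\mathcal{O}(\tilde f)$, it carries connected components to connected components homeomorphically; as $\Phi(f) = \tilde f$, it restricts to a homeomorphism $\mathcal{O}_f(f)\cong\mathcal{O}_{\tilde f}(\tilde f)$, which gives the stated conclusion.
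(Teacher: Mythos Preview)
Your map $\Phi$ is exactly the one implicit in the paper's proof, which identifies both orbits with the common quotient $\mathcal{D}(M)/\mathcal{S}(f)=\mathcal{D}(M)/\mathcal{S}(\tilde f)$ and cites \cite[Lemma~5.3]{Maksymenko:DefFuncI:2014} for the equality of stabilizers. So the strategies coincide; the difference is that the paper outsources the key step, while you try to prove it directly --- and that is where there is a genuine gap.

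Your well-definedness argument claims that $\tilde f\circ h_1$ and $\tilde f\circ h_2$ ``agree at the chosen normalizing point(s)''. But if $\tilde f$ is pinned down by $\tilde f(x_0)=c_0$, there is no reason for $\tilde f(h_1(x_0))$ to equal $\tilde f(h_2(x_0))$: the hypothesis $f\circ h_1=f\circ h_2$ only says $h_1(x_0)$ and $h_2(x_0)$ lie in the same $f$-level set, not the same $\tilde f$-level set. Lifting uniqueness therefore gives only $\tilde f\circ h_1-\tilde f\circ h_2\equiv k$ for some integer $k$ on each component of $M$, and you still have to show $k=0$. The missing observation is that, since $h_1,h_2$ are diffeomorphisms and $M$ is compact, $\tilde f\circ h_1$ and $\tilde f\circ h_2$ have the same (bounded) range $\tilde f(M)\subset\RRR$, hence the same maximum, forcing $k=0$. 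Equivalently, this is exactly the statement $\mathcal{S}(f)=\mathcal{S}(\tilde f)$ that the paper invokes. Once that is fixed, continuity of $\Phi$ is also simpler than your lifting-continuity sketch: since $\zeta_f$ and $\zeta_{\tilde f}$ are open surjections (principal fibrations by Theorem~\ref{thm:homotopy-gneral}(1)) and $\Phi\circ\zeta_f=\zeta_{\tilde f}$, continuity of $\Phi$ follows from the universal property of the quotient topology.
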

\begin{proof}
	Recall that $\zeta_{f}:\mathcal{D}(M)\to \mathcal{O}(f)$ and $\zeta_{\tilde{f}}:\mathcal{D}(M)\to\mathcal{O}(\tilde{f})$ are locally trivial principal fibrations with fibers $\mathcal{S}(f)$ and $\mathcal{S}(\tilde{f})$ respectively. Then $\zeta_f$ decomposes as
	the composition 
	$$
	\xymatrix@R=2em{
		\mathcal{D}(M) \ar[rd]_{\alpha_f} \ar[r]^{\zeta_f} & \mathcal{O}(f)\\
		&   \mathcal{D}(M)/{\mathcal{S}(f)} \ar[u]_{\beta_f}^{\cong},
	}
	$$
	where $\alpha_f(h) = \overline{h}$ is an open quotient map, and $\beta_f(\overline{h}) = \zeta_f(h)$ is an induced by $\zeta_f$ homeomorphism, where $\overline{h}$ is a coset of $h\in\mathcal{D}(M)$ modulo $\mathcal{S}(f)$. The same decomposition holds for $\zeta_{\tilde{f}}$. So $\mathcal{O}(f)$ and $\mathcal{O}(\tilde{f})$ are homeomorphic to quotient groups $\mathcal{D}(M)/\mathcal{S}(f)$ and $\mathcal{D}(M)/\mathcal{S}(\tilde{f})$ respectively. S.~Maksymenko \cite[Lemma 5.3]{Maksymenko:DefFuncI:2014} showed that if $f$ is null-homotopic then there is a homeomorphism $\mathcal{S}(f) \cong \mathcal{S}(\tilde{f})$. Therefore quotient spaces $\mathcal{D}(M)/\mathcal{S}(f)$ and $\mathcal{D}(M)/\mathcal{S}(\tilde{f})$ are homeomorphic, which implies that $\mathcal{O}(f)$ is homeomorphic to $\mathcal{O}(\tilde{f}),$ and so $\mathcal{O}_f(f)\cong \mathcal{O}_{\tilde{f}}(\tilde{f}).$
\end{proof}
By Proposition \ref{prop:circle-null-orbit} homotopy properties of $\mathcal{O}_f(f)$ for a null-homotopic  function $f\in\mathcal{F}(M,S^1)$ are the ``same'' as for $\mathcal{O}_{\tilde{f}}(\tilde{f})$ where $\tilde{f}$ is a universal lift of $f$. Therefore this case is completely reduced  to the real-valued case, and so we will focus  on the case when $f$ is not null-homotopic.

\subsection{Graphs of $P$-valued Morse functions}\label{subsec:Graphs}
Kronrod-Reed graphs are  important tools to study smooth functions since they carry a lot of informations about its combinatorial structure. We recall this definition.
Let $f\in\mathcal{F}(M,P)$ be a  function on a smooth compact oriented surface $M$ and $c\in P$. 
Let $\Xi$ be a partition of $M$ into connected components of level sets (leaves, see subsection \ref{subsec:f-adapted}) of $f$. It is well-known that the quotient space $M/\Xi$ denoted by $\Gamma_f$ has a structure of an $1$-dimensional CW complex called a {\it Kronrod-Reeb graph} of $f$, or simply a {\it  graph} of $f$ \cite{Reeb:ASI:1952}.

Let $q_f: M\to \Gamma_f$ be a quotient map. Then the map $f:M\to P$ can be presented as the composition of the projection $q_f$ onto $\Gamma_f$ and the map $f_{\Gamma}$ induced by $f$:
\begin{equation}\label{eq:graph-sequence}
	\xymatrix{
		f = f_{\Gamma}\circ q_f: & M\ar[r]^{q_f} & \Gamma_f \ar[r]^{f_{\Gamma}} & P.
	}
\end{equation}

\subsection*{Graphs of $S^1$-valued function on $T^2$}
Graphs of ordinary Morse functions on $T^2$ were studied in \cite{KravchenkoFeshchenko:2020:MFAT}. 
For functions  with isolated singularities on $2$-torus the following lemma holds true.
\begin{lemma}[cf. Lemma 3.1 \cite{Feshchenko:DefFuncI:2019}]\label{lemma:Gamma_f-S^1} Let $f:T^2\to S^1$ be a function  with isolated singularities.
	
	{\rm(1)} The map $q_f^*:\pi_1T^2\to \pi_1\Gamma_f$ induced by $q_f$ is an epimorphism with a non-zero kernel. So $b_1(\Gamma_f)\leq 1$, i.e., $\Gamma_f$ can be either a tree or contains a unique cycle.
	
	{\rm(2)} If $f$ is not null-homotopic, then $\Gamma_f$ is not a tree.
\end{lemma}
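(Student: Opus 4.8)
The plan is to analyze the quotient map $q_f\colon T^2\to \Gamma_f$ at the level of fundamental groups. First I would recall the standard fact about Kronrod-Reeb graphs: since $f = f_\Gamma\circ q_f$ factors through $\Gamma_f$ and each leaf of $f$ is a connected subset of $T^2$, the quotient $\Gamma_f$ is a finite one-dimensional CW complex, and $q_f$ is surjective with connected fibers. The key input for surjectivity of $q_f^*$ is that $\Gamma_f$, being a connected graph, is homotopy equivalent to a wedge of circles, and one can choose a maximal tree in $\Gamma_f$; lifting loops representing the free generators of $\pi_1\Gamma_f$ back through $q_f$ (using that $q_f$ admits local sections over the tree away from the finitely many vertices corresponding to critical leaves, plus the fact that regular leaves are circles along which $q_f$ is a trivial circle bundle locally) shows every element of $\pi_1\Gamma_f$ is hit. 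Alternatively, and more cleanly, I would argue that $q_f$ is surjective with path-connected fibers and $T^2$ is path-connected, hence $q_f^*$ is surjective on $\pi_1$ — this is a general fact whenever the fibers are connected and the base is a CW complex, via a transversality/lifting argument for the finitely many vertices.

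Next, for the kernel: since $q_f^*$ is surjective and $\pi_1 T^2\cong\ZZZ^2$ is abelian, $\pi_1\Gamma_f$ is a quotient of $\ZZZ^2$, hence abelian; but $\pi_1$ of a finite graph is free, and a free abelian group that is free must have rank $\le 1$. Therefore $\pi_1\Gamma_f$ is either trivial or $\ZZZ$, i.e. $b_1(\Gamma_f)\le 1$, which gives the dichotomy "tree or unique cycle". To see the kernel is nonzero, note that $\ZZZ^2$ has rank $2$ while $\pi_1\Gamma_f$ has rank $\le 1$, so the surjection $q_f^*$ cannot be injective; equivalently any two elements of $\pi_1 T^2$ become linearly dependent in $\pi_1\Gamma_f$, producing a nontrivial relation, i.e. a nonzero element of $\ker q_f^*$. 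This establishes part (1).

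For part (2), I would argue by contraposition: suppose $\Gamma_f$ is a tree, so $\pi_1\Gamma_f = 0$. Then $f_\Gamma\colon\Gamma_f\to S^1$ induces the zero map on $\pi_1$, and since $f = f_\Gamma\circ q_f$, the induced map $f^*\colon\pi_1 T^2\to\pi_1 S^1$ is also zero. A map $T^2\to S^1$ inducing the trivial homomorphism on $\pi_1$ lifts through the universal cover $p\colon\RRR\to S^1$ (by the lifting criterion, since $\pi_1 T^2$ maps into $p_*\pi_1\RRR = 0$), hence $f$ is null-homotopic. Contrapositively, if $f$ is not null-homotopic then $\Gamma_f$ is not a tree.

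The main obstacle is the surjectivity statement in part (1): establishing that $q_f^*$ is onto requires genuine control over the geometry of $q_f$ near critical leaves, where $q_f$ is not a fibration. I expect to handle this by a standard deformation argument — pushing any loop in $T^2$ off the critical leaves and into the union of regular-leaf neighborhoods, where $q_f$ restricts to projections of trivial $S^1$- or $I$-bundles — so that its $q_f$-image realizes an arbitrary prescribed class; this is exactly the content of the cited Lemma 3.1 of \cite{Feshchenko:DefFuncI:2019}, whose proof I would adapt verbatim to the $S^1$-valued setting since the argument is purely local around singularities and does not see the target.
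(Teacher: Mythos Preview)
Your proposal is correct and follows essentially the same route as the paper. For part~(1) the paper, like you, defers entirely to \cite[Lemma~3.1]{Feshchenko:DefFuncI:2019} (you are in fact more explicit than the paper, spelling out the free-versus-abelian rank argument that forces $b_1(\Gamma_f)\le 1$); for part~(2) the paper argues exactly as you do, factoring $f^* = f_\Gamma^*\circ q_f^*$ through $\pi_1\Gamma_f = 0$ to conclude $f^*=0$ and obtain a contradiction with non-null-homotopy.
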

It is well-known that the inequality $b_1(\Gamma_f)\leq 1$ holds for  generic circle-valued Morse functions on $2$-torus \cite[Theorem 3.7]{RezendeLedesmaatallL2018} and for arbitrary circle-valued Morse functions on $2$-torus this inequality follows from  Morse form foliation theory, see \cite[Theorem 2.1, Theorem 3.1, Example 3.2]{Gelbukh2009OnTS}.
\begin{proof}[Proof of Lemma \ref{lemma:Gamma_f-S^1}]
	(1) This statement for ordinary Morse functions is proved in \cite[Lemma 3.1]{Feshchenko:DefFuncI:2019} and holds true for functions with isolated singularities. The reader can verify it step-by-step as in \cite[Lemma 3.1]{Feshchenko:DefFuncI:2019}.
	
	(2) Assume the converse holds and consider the sequence of fundamental groups induced by \eqref{eq:graph-sequence}:
	$$
	\xymatrix@R=1.5em{
		\pi_1 T^2 \ar@{=}[d] \ar[r]^{q_f^*} & \pi_1 \Gamma_f \ar@{=}[d]	\ar[r]^{f_{\Gamma}^*} & \ar@{=}[d]  \pi_1 S^1\\
		\ZZZ^2\ar[r]^{q_f^*} & 0\ar[r]^{f^*_{\Gamma}} & \ZZZ
	}
	$$
	The homomorphism $f_{\Gamma}^*$ is a zero-map, so the homomorphism $f^* = f_{\Gamma}^*\circ  q_{f}^*$ induced by $f$ is also a zero map. This leads to the contradiction that $f$ is not null-homotopic, so $\Gamma_f$ contains a cycle.
\end{proof}

The case of  functions $f:T^2\to P$ from the class $\mathcal{F}$ whose graphs are trees is a special case -- it is only possible when $f$ is null-homotopic.  So in further text we will study only the case of  functions whose graphs contain a  cycle.  Examples of such functions are given in Figure \ref{fig:functions-s^1}.
\begin{figure}[h]
	\centering
	\includegraphics[width=8cm]{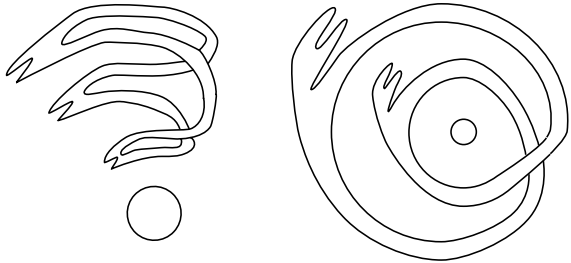}
	\caption{Null-homotopic  (left) and {\it not} null-homotopic (right) functions $T^2\to S^1$}
	\label{fig:functions-s^1}
\end{figure}
Note that  functions in  Figure \ref{fig:functions-s^1}  allow some  ``combinatorial symmetries'' preserving the given functions which play an essential role in the description of $\pi_1\mathcal{O}_f(f)$.

\subsection{Wreath products}\label{subsec:wreath}
To state our main result we need a notion of wreath product of groups of a special kind.
Let $G$ be a group and $n\geq 1$ be an integer. A semi-direct product $G^n\rtimes\ZZZ$ with respect to a non-effective $\ZZZ$-action $\alpha$  on $G^n$ by cyclic shifts
$$
\alpha(b_0,b_1,\ldots, b_{n-1}; k) = (b_k, b_{1+k}, \ldots, b_{n+k-1}),
$$
where all indexes are taken modulo $n$,
will be denoted by $G\wr_n\ZZZ$ and called a {\it wreath product} of $G$ with $\ZZZ$ under $n.$
Notice that this definition differs from the standard one, \cite{Meldrum:Longman:1995}.

\section{Main result}\label{sec:main-result}
The following theorem describes an algebraic structure of $\pi_1\mathcal{O}_f(f)$ for  functions $f:T^2\to P$ from the class $\mathcal{F}$ whose graphs contain a cycle. It will be proved in Section \ref{sec:Proof-main}.
\begin{theorem}\label{thm:main-th}
	Let $f$ be a function from $\mathcal{F}(T^2,P)$ with at least one critical point and whose graph $\Gamma_f$ contains a cycle. Then there exist a cylinder $Q\subset T^2$ and $n\in \NNN$ such that $f|_Q\in\mathcal{F}(Q,P)$  and there is an isomorphism
	$$
	\pi_1\mathcal{O}_f(f) \cong \pi_0\mathcal{S}'(f|_Q,N(\partial Q))\wr_n\ZZZ,
	$$ 
	where $N(\partial Q)$ is some $f$-regular neighborhood of $\partial Q$.
\end{theorem}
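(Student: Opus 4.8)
The plan is to reduce the computation of $\pi_1\mathcal{O}_f(f)$ for the circle-valued case to the real-valued computation over a fundamental domain, exploiting that $\Gamma_f$ has a unique cycle (Lemma~\ref{lemma:Gamma_f-S^1}). First I would use the short exact sequence \eqref{seq:homotopy-pi1} together with the fact that $\pi_1\mathcal{D}_{\id}(T^2)\cong\ZZZ^2$ is generated by $\mathbf{L}$ and $\mathbf{M}$; by Theorem~\ref{thm:homotopy-gneral}(3) this $\ZZZ^2$ lands in the center of $\pi_1\mathcal{O}_f(f)$. The isomorphism $\varkappa$ of \eqref{eq:varkappa} lets me work with relative homotopy classes of isotopies $\{h_t\}$ of $T^2$ with $h_1\in\mathcal{S}'(f)$, which I will analyze through their induced action on $\Gamma_f$. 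Since $q_f^*:\pi_1T^2\surj\pi_1\Gamma_f\cong\ZZZ$ is onto, any $f$-preserving diffeomorphism isotopic to the identity either acts trivially on the cycle of $\Gamma_f$ or shifts along it; cutting $T^2$ along a regular leaf $K$ sitting over the cycle produces a cylinder $Q$, and $f|_Q\in\mathcal{F}(Q,P)$ by the $f$-adapted-submanifold remark in subsection~\ref{subsec:f-adapted}.

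The key steps, in order: (i) Fix a regular leaf $K$ over an interior point of the cycle of $\Gamma_f$ and let $Q = T^2\setminus \mathrm{Int}\,N(K)$ be the resulting cylinder, with $N(\partial Q)$ an $f$-regular neighborhood of $\partial Q$; the integer $n$ will be the number of critical leaves (equivalently, the combinatorial ``period'') along the cycle that can be permuted by symmetries of $f$, i.e. the size of the orbit of a fixed edge of the cycle under $G(f)$. (ii) Show that the subgroup of $\pi_0\mathcal{S}'(f)$ of classes acting trivially on the cycle of $\Gamma_f$ is isomorphic to $\bigl(\pi_0\mathcal{S}'(f|_Q,N(\partial Q))\bigr)^n$: a diffeomorphism fixing each edge of the cycle setwise is, up to isotopy, determined by its restrictions to the $n$ ``sectors'' of $T^2$ lying over the edges, each sector being a copy of $Q$ (or a piece thereof), and restriction gives the product decomposition; here Theorem~\ref{thm:homotopy-gneral}(4) applies since $\chi(Q)=0<|N(\partial Q)|$ is finite, so $\pi_1\mathcal{O}_f(f|_Q,N(\partial Q))\cong\pi_0\mathcal{S}'(f|_Q,N(\partial Q))$. (iii) Identify the quotient $\pi_0\mathcal{S}'(f)\big/(\text{edge-fixing part})$ with $\ZZZ$, generated by a diffeomorphism cyclically shifting the $n$ sectors; this uses that such a shift is realized inside $\mathcal{S}'(f)$ and that the action of $\ZZZ$ on the normal subgroup is exactly the cyclic shift $\alpha$ of subsection~\ref{subsec:wreath}, yielding $\pi_0\mathcal{S}'(f)\cong\pi_0\mathcal{S}'(f|_Q,N(\partial Q))\wr_n\ZZZ$. (iv) Finally, incorporate the central $\ZZZ^2=\langle\mathbf{L},\mathbf{M}\rangle$ from \eqref{seq:homotopy-pi1}: show $\partial_1(\mathbf{L})$ generates the ``$\ZZZ$-factor contribution'' — i.e. $\mathbf{L}$ maps to the $n$-th power of the shift generator inside $\pi_0\mathcal{S}'(f)$ (a full loop around the cycle permutes sectors $n$ times back to start but records a nontrivial twist), while $\mathbf{M}$ maps to $0$ under $\partial_1$ and its image in $\pi_1\mathcal{O}_f(f)$ is absorbed — so that the extension \eqref{seq:homotopy-pi1} is recognized, after bookkeeping the central elements, as the claimed wreath product $\pi_0\mathcal{S}'(f|_Q,N(\partial Q))\wr_n\ZZZ$ with its own intrinsic center playing the role of $\mathrm{im}(\zeta_1)$.

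The main obstacle I anticipate is step~(iv): the sequence \eqref{seq:homotopy-pi1} does not split in general \cite{MaksymenkoFeshchenko:2014:HomPropTree}, so I cannot simply write $\pi_1\mathcal{O}_f(f)\cong\ZZZ^2\times\pi_0\mathcal{S}'(f)$; I must instead show that the central extension of $\pi_0\mathcal{S}'(f)\cong B\wr_n\ZZZ$ by $\mathrm{im}(\zeta_1)$ is isomorphic, as a group, to $B\wr_n\ZZZ$ itself (where $B = \pi_0\mathcal{S}'(f|_Q,N(\partial Q))$). The point is that $B\wr_n\ZZZ = B^n\rtimes\ZZZ$ contains a natural central copy of $\ZZZ^2$ (coming from the ``diagonal'' of the $B^n$ part intersected with its center, together with a power of the $\ZZZ$-generator), and one checks the pullback square identifying $\zeta_1(\mathbf{L})$ and $\zeta_1(\mathbf{M})$ with these two central generators — the delicate part being to verify that the image of $\mathbf{M}$ is precisely the correct central element of $B^n$, which is where Proposition~\ref{theorem:kernel-phi-M} (referenced in the introduction as controlling the image of $\mathbf{M}$) enters. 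A secondary technical point is making step~(ii) rigorous: one must carefully use that diffeomorphisms fixed on the $f$-regular neighborhoods of the cut leaves can be cut and reglued without changing their isotopy class in $\mathcal{S}'$, which follows from the standard parametrized-isotopy-extension arguments underlying Theorem~\ref{thm:homotopy-gneral}.
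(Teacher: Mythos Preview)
Your overall picture (cut the torus into $n$ cylinders along an $\mathcal{S}'(f)$-orbit of regular leaves, get a product of local stabilizer groups, add a cyclic shift) is the same as the paper's, but there is a genuine error in step~(iii) that propagates into step~(iv). The quotient $\pi_0\mathcal{S}'(f)\big/(\text{edge-fixing part})$ is \emph{not} $\ZZZ$: the shift diffeomorphism $g_1$ satisfies $g_1^n=\id_{T^2}$ as an honest diffeomorphism (Proposition~\ref{prop:phi-g-isotopy}(2)), so its class in $\pi_0\mathcal{S}'(f)$ has order dividing $n$, and the quotient is finite cyclic. Consequently $\pi_0\mathcal{S}'(f)$ is \emph{not} isomorphic to $B\wr_n\ZZZ$; it is a quotient of that group by a central $\ZZZ^2$. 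Your step~(iv) is therefore not a matter of ``absorbing'' $\mathrm{im}(\zeta_1)$ into an already-present center of $B\wr_n\ZZZ$: you would have to \emph{reconstruct} $B\wr_n\ZZZ$ as a specific central extension of $(B\wr_n\ZZZ)/\ZZZ^2$ by $\ZZZ^2$, i.e.\ compute the extension class of \eqref{seq:homotopy-pi1} and match it. That is exactly the difficulty the paper is designed to avoid. A related imprecision: your $Q$ (the complement of a neighborhood of a single leaf $K$) is the \emph{large} cylinder, whereas the $Q$ in the theorem is one of the $n$ \emph{small} cylinders $Q_0$ bounded by consecutive curves $C_0,C_1$; your ``$n$ sectors each a copy of $Q$'' cannot all fit inside your $Q$.

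The paper sidesteps the extension problem entirely by never passing through $\pi_0\mathcal{S}'(f)$. Instead it builds an epimorphism $\phi:\pi_1\mathcal{O}_f(f)\to\ZZZ$ \emph{directly}: given a loop $\omega$ in $\mathcal{O}_f(f)$, lift the associated isotopy $h_t$ through the covering $q:\RRR\times S^1\to T^2$ and record the integer shift $\tilde h_1(\{i\}\times S^1)=\{i+\phi_h\}\times S^1$ (Proposition~\ref{proposition:phi-kernel}). The kernel of $\phi$ is identified with $\pi_0\mathcal{S}'(f,\mathsf{W})$, which decomposes as the product $\prod_i L_i$ with $L_i=\pi_0\mathcal{S}'(f|_{Q_i},X_i)$. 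One then exhibits an isotopy $g$ with $\phi([f\circ g_t])=1$, $g_1(Q_i)=Q_{i+1}$, and $g_1^n=\id_{T^2}$ (Proposition~\ref{prop:phi-g-isotopy}), and the algebraic criterion Lemma~\ref{lm:charact:GwrZZZ} immediately yields $\pi_1\mathcal{O}_f(f)\cong L_0\wr_n\ZZZ$. Note in particular that the proof uses $\mathbf{L}$ only to normalize $\phi([f\circ g_t])$ to $1$, and does \emph{not} need to locate $\mathbf{M}$ at all; Proposition~\ref{theorem:kernel-phi-M} is a separate, supplementary statement, not an ingredient of the proof of Theorem~\ref{thm:main-th}.
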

\begin{remark}
	{\rm
		Theorem \ref{thm:main-th} generalizes the main result of the paper \cite{MaksymenkoFeshchenko:2015:HomPropCycleNonTri}. To prove \cite[Theorem 1.6]{MaksymenkoFeshchenko:2015:HomPropCycleNonTri} we mainly use ``local'' technique, i.e.,  properties of diffeomorphisms of subsurfaces of $T^2$ and  ``glue'' them together to obtain global ones. So Theorem \ref{thm:main-th}  can be proved step-by-step by the same arguments and strategy. In the present paper we give more straightforward proof of this result separating algebraic methods from topological ones. Proofs of some known facts will be given only for the sake of completeness.
	}
\end{remark}

\section{Additional constructions and definitions}\label{sec:Additional-construction-definitions}
\subsection{Curves on $T^2$}
Let $f$ be a function from $\mathcal{F}(T^2, P)$ whose graph $\Gamma_f$ contains a unique cycle denoted by $\Lambda$, let also $q_f:T^2\to \Gamma_f$ be a projection induced by $f.$ Let $z$ be a point in $\Lambda$, $c = f(q^{-1}_f(z))$ be a point in $S^1$, and $C$ be a regular connected component of $f^{-1}(c)$. Note that $f^{-1}(c)$ consists of finitely many connected components and it is invariant under the action of $\mathcal{S}'(f)$.  We set $\mathcal{C} = \{ h(C)\,|\, h\in \mathcal{S}'(f)\}$.   Since $\mathcal{C}$ has finite cardinality we can cyclically enumerate elements of the set 
$\mathcal{C} = \{ C_0 = C, C_1, C_2, \ldots, C_{n-1} \}$ for some  $n\in \NNN.$ Curves from  $\mathcal{C}$ are mutually disjoint and do not separate $T^2$, and each pair $C_i$ and $C_{i+1}$ bounds a cylinder $Q_i\subset T^2$, where all indexes are taken modulo $n$.

\subsection{$f$-regular neighborhoods of curves}\label{ss:f-adapter-neighborhood}
We regard $S^1$ and $T^2$ as a quotient spaces $\RRR/\ZZZ$ and $\RRR^2/\ZZZ^2$ respectively.
By a proper choose of coordinates on $T^2$ one can assume that the following conditions hold:
\begin{itemize}
	\item $C_i = \big\{\frac{i}{n}\big\}\times S^1\subset \RRR^2/\ZZZ^2 =  T^2,$ so we can regard each curve $C_i$ as a meridian of $T^2$, and the curve $C'=\{0\}\times S^1$ as a longitude of $T^2$.
	\item there exists $\varepsilon>0$ such that  for all $t\in (\frac{i}{n} - \varepsilon, \frac{i}{n} + \varepsilon)$ the curve $\big\{t\big\}\times S^1$ is regular connected component of some level set of $f$.
\end{itemize}
This assumption makes possible to define $f$-regular neighborhoods  of curves from $\mathcal{C},$ see Subsection \ref{subsec:f-adapted}.
So an $f$-regular neighborhood $V$ of a curve $C$ is saturated neighborhood which has a cylindrical structure.

In this place we fix two families of $f$-regular neighborhoods $V_i$ and $W_i$ of $C_i$, $i = 0,\ldots, n-1$ needed in the further text, so that  $V_i \cap V_j = \varnothing$, $V_i\subset \mathrm{Int}(W_i)$ for $i\not=j$ and for each $i,j$ there exists $h\in\mathcal{S}'(f)$ such that $h(V_i)=V_j$.
In particular,  unions
$\mathsf{V} = \bigcup_{i = 0}^{n-1}V_i$ and $\mathsf{W} = \bigcup_{i = 0}^{n-1}W_i$
are $\mathcal{S}'(f)$-invariant.

\subsection{Generators of $\pi_1\mathcal{D}_{\id}(T^2)$}
Let $\mathbf{L}, \mathbf{M}:T^2\times [0,1]\to T^2$ be two isotopies defined by 
\begin{equation}\label{eq:IsotopiesML}
	\mathbf{L}(x,y,t) = (x + t\,\mod 1, y),\qquad \mathbf{M}(x,y,t) = (x, y+ t\,\mod 1)
\end{equation}
for $x\in C'$, $y\in C_k$, $k = 0,1,\ldots, n-1$. Geometrically $\mathbf{L}$ is a rotation of $T^2$ along its longitude, and $\mathbf{M}$ is a rotation along meridians.  Isotopies $\mathbf{L}$ and $\mathbf{M}$ can be regarded as loops in $\mathcal{D}_{\id}(T^2)$ It is well known that  $\mathbf{L}$ and $\mathbf{M}$ commute and
$\pi_1\mathcal{D}_{\id}(T^2) = \langle \mathbf{L}\rangle \times \langle\mathbf{M}\rangle$, see \cite{EarleEells:DG:1970,Gramain:ASENS:1973}.

\subsection{Dehn twists and slides along curves from $\mathcal{C}$} Let $Q = S^1\times [0,1]$ be a cylinder and $C$ be the curve $S^1\times \{0\}$, and $\alpha,\beta:[-1,1]\to [0,1]$ be two smooth functions such that 
$$
\alpha(x) = \begin{cases}
0,& x\in [-1,-1/2],\\
1,& x\in [1/2,1],
\end{cases}
\qquad
\beta(x) = \begin{cases}
0, & x\in [-1,-2/3]\cup [2/3,1],\\
1, & x\in [-1/3,1/3].
\end{cases} 
$$
Define two diffeomorphisms of $Q$ by formulas:
$$
\tau(z,t) = (z e^{\alpha(t)},t)\qquad \theta(z,t) = (ze^{\beta(t)}, t),\qquad (z,t)\in Q;
$$
the diffeomorphisms $\tau$ and $\theta$ are called a {\it Dehn twist} and a {\it slide} along $C = S^1\times \{0\}.$ Note that $\tau$ is fixed on some neighborhood of $\partial Q$, and $\theta$ is fixed on some neighborhood of $C\cup \partial Q$. A diffeomorphism of a smooth surface $M$ supported in some cylindrical neighborhood of a simple closed and  two-sided curve $C\subset M$ isotopic to a Dehn twist with respect to the boundary of this neighborhood will be called a Dehn twist along $C$ on $M$. Similarly the notion of slide along $C$ can be extended to the case of surfaces.

Recall that a vector field $F$ on a smooth oriented surface $M$ is called {\it Hamiltonian-like} for a  function $f$ from the class $\mathcal{F}$ if the following conditions satisfied:
\begin{itemize}
	\item singular points of $F$ correspond to critical points of $f$,
	\item $f$ is constant along $F$,
	\item Let $z$ be a critical point of $f$. Then there exists a local coordinate system $(x,y)$ such that $f(z) = 0$, $f(x,y) = \pm x^2\pm y^2$ near $z$, and in this coordinates $F$ has the form $F(x,y) = -f'_y \partial/\partial x + f'_x\partial/\partial y.$
\end{itemize}
Fix a Hamiltonian-like vector field $F$ of the given function $f:T^2\to S^1$, and let $\mathbf{F}_t:T^2\to T^2$, $t\in\RRR$ be its flow. The set $\mathsf{W}$ does not consist singular points of $\mathbf{F}$ and it  is $\mathbf{F}$-invariant and consists of periodic orbits. So one can assume that periods of all trajectories of $\mathbf{F}$ are equal to $1$ on $\mathsf{W}$.

Let $\theta_i:T^2\to T^2$ be a slide along $C_i$ supported on $W_i-V_i$, $i = 0,1,\ldots, n-1$, and set $\theta = \theta_0\circ \theta_1\circ \ldots\circ \theta_{n-1}.$ We proved that there exists a smooth function $\sigma:T^2\to \RRR$ which satisfies
\begin{itemize}
	\item $\sigma$ is constant along trajectories of $\mathbf{F},$
	\item $\sigma = 1$ on $\mathsf{V}$, $\sigma = 0$ on $T^2-\mathsf{W}$, and
	\item $\theta = \mathbf{F}_{\sigma}$,
\end{itemize}
and therefore $\theta^k = \mathbf{F}_{k\sigma}$, see \cite[ Lemma 5.2]{MaksymenkoFeshchenko:2015:HomPropCycleNonTri}. A free abelian group generated by $\theta$ will be denoted by $\langle\theta\rangle$.

\subsection{Characterization of direct and wreath products}\label{subsec:algebraic-preliminaries}
In this paragraph we recall conditions when the group $G$ splits into a direct product of its subgroups and discuss when $G$ splits into a wreath product as in subsection \ref{subsec:wreath}.
Let $G$ be a group and $G_1,\ldots, G_n$ be their subgroups. It is well-known that the group $G$ splits into a direct product $G_1\times G_2\times\ldots \times G_n$ if the following three conditions satisfied:
\begin{enumerate}
	\item[(D1)] $G_i\cap G_j = \{e\}$, for $i\neq j = 1,2,\ldots, n$, where $e$ is the unit of $G$,
	\item[(D2)] $G_iG_j = G_jG_i$ for all $i,j = 1,2,\ldots, n,$
	\item[(D3)] groups $G_1$, $G_2$ and $G_n$ generate $G$.
\end{enumerate}

The following lemma  gives  conditions when the group $G$ splits into a wreath product $L_0\wr_n\ZZZ$ for some subgroup $L_0\subset G.$
\begin{lemma}[Lemma 2.3 \cite{Maksymenko:DefFuncI:2014}]\label{lm:charact:GwrZZZ}
	Let $\phi:G\to \ZZZ$ be an epimorphism and $L_0$ be a subgroup of $\ker\phi.$ Let also 	$g\in G$ be with $\phi(g) = 1$. Assume that for  some $n\in\ZZZ$ the following conditions hold:
	\begin{enumerate}
		\item $g^n$ commutes with $\ker \phi$,
		\item $\ker\phi$ splits into a direct product of 
		$$
		L_0, \quad L_1 = g^{-1}L_0g^{1}, \quad\ldots, \quad L_{m-1} = g^{-(n-1)}L_0g^{n-1}.
		$$
	\end{enumerate}
	Then the map $\xi:L_0\wr_n\ZZZ\to G$ given by the formula
	\begin{align}\label{eq:xi-isomorphism}
		\xi(b_0,b_1,\ldots, b_{n-1}, k) &= b_0(g^{-1}b_1g^1) (g^{-2}b_2g^{2})\ldots (g^{-n+1}b_{n-1}g^{n-1})g^k \nonumber  \\
		&=b_0g^{-1}b_1\ldots g^{-1}b_{n-1}b^{-1+n+k}
	\end{align}
	is an isomorphism.
\end{lemma}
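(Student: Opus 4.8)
The plan is to verify that the map $\xi$ defined in \eqref{eq:xi-isomorphism} is a well-defined homomorphism, and then that it is bijective, using hypotheses (1) and (2) of Lemma \ref{lm:charact:GwrZZZ}. First I would record the basic normal-form consequence of hypothesis (2): every element $b\in\ker\phi$ factors \emph{uniquely} as $b = b_0(g^{-1}b_1g)(g^{-2}b_2g^2)\cdots(g^{-(n-1)}b_{n-1}g^{n-1})$ with each $b_i\in L_0$, since the conjugates $L_i = g^{-i}L_0g^i$ generate $\ker\phi$ and pairwise intersect trivially and commute elementwise (conditions (D1)--(D3)). This immediately identifies the underlying \emph{set} map $\ker\phi \times \ZZZ \to G$, $(b_0,\dots,b_{n-1},k)\mapsto b_0(g^{-1}b_1g)\cdots(g^{-(n-1)}b_{n-1}g^{n-1})g^k$, with a bijection onto $G$: indeed any $x\in G$ has $\phi(x)=k$ for a unique $k$, so $xg^{-k}\in\ker\phi$ has a unique such factorization, and conversely distinct tuples give distinct products by uniqueness of the normal form together with injectivity of $\phi$ on the $g^k$ coset labels. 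This is the bijectivity half.

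Next I would check that $\xi$ respects the group operations, i.e.\ that it intertwines the multiplication in $L_0\wr_n\ZZZ$ (which, by the definition in subsection \ref{subsec:wreath}, is $(b,k)\cdot(b',k') = (b\cdot\alpha(b';k), k+k')$ with $\alpha$ the cyclic shift) with multiplication in $G$. The key computational point is conjugation by $g$: for $b'=(b'_0,\dots,b'_{n-1})$ one has, writing $B' = b'_0(g^{-1}b'_1g)\cdots(g^{-(n-1)}b'_{n-1}g^{n-1})$, that $g^{-1}B'g$ equals the product of the $g^{-i}b'_{i-1}g^i$, i.e.\ the image of the shifted tuple $\alpha(b';1)$ — except that the last factor $g^{-n}b'_{n-1}g^n$ must be rewritten as $b'_{n-1}$ conjugated back into $L_0$, which is exactly where hypothesis (1) enters: since $g^n$ commutes with $\ker\phi\supset L_0$, we have $g^{-n}b'_{n-1}g^n = b'_{n-1}\in L_0$, so the shifted product again lies in the standard normal form. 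Iterating, $g^{-k}B'g^k$ is the product corresponding to $\alpha(b';k)$. Then
$$
\xi(b,k)\,\xi(b',k') = B g^k B' g^{k'} = B\,(g^k B' g^{-k})\,g^{k+k'} = B\cdot \big(\text{product for }\alpha(b';k)\big)\cdot g^{k+k'},
$$
and since $B$ and the product for $\alpha(b';k)$ both lie in $\ker\phi$ and the two tuples $b$ and $\alpha(b';k)$ multiply componentwise in the direct product $\ker\phi \cong L_0\times\cdots\times L_0$, this equals $\xi\big(b\cdot\alpha(b';k),\,k+k'\big) = \xi\big((b,k)\cdot(b',k')\big)$, as required. (The second displayed line of \eqref{eq:xi-isomorphism} is just the same element rewritten using $g^{-1}b_1g\cdot g^{-1} = g^{-1}b_1$ telescoping; I would note this identity in passing but not dwell on it.)

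Combining the two halves: $\xi$ is a homomorphism and its underlying set map is a bijection, hence $\xi$ is an isomorphism. The main obstacle — really the only nonroutine point — is getting the conjugation-by-$g^k$ bookkeeping exactly right at the ``wrap-around'' index, i.e.\ seeing precisely that hypothesis (1) is what makes the shift by $\alpha$ (rather than some twisted shift) correct, and checking that the $L_i$ indeed commute elementwise so that multiplication in $\ker\phi$ is genuinely componentwise. Everything else is formal. Since this is Lemma 2.3 of \cite{Maksymenko:DefFuncI:2014}, I would at most sketch these verifications and refer there for the full details.
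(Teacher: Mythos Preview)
The paper does not give its own proof of this lemma: it is simply quoted as Lemma 2.3 of \cite{Maksymenko:DefFuncI:2014} and used as a black box in Section~\ref{sec:Proof-main}. So there is no in-paper argument to compare against; your sketch already goes further than the paper does, and appropriately so, since you end by saying you would refer to \cite{Maksymenko:DefFuncI:2014} for details.

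Your argument is the standard one and is correct in outline: bijectivity from the unique normal form in $\ker\phi$ coming from hypothesis (2) together with $\phi(g)=1$, and the homomorphism check reducing to the identity $g^{k}B'g^{-k}=\text{(normal form of }\alpha(b';k)\text{)}$, with hypothesis (1) handling the wrap-around index. One small bookkeeping slip to fix: in your prose you compute $g^{-1}B'g$ and claim it corresponds to $\alpha(b';1)$, but with the paper's convention $\alpha(b_0,\dots,b_{n-1};k)=(b_k,\dots,b_{n+k-1})$ that conjugate actually corresponds to $\alpha(b';-1)$; it is $g\,B'g^{-1}$ that gives $\alpha(b';1)$. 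Your displayed equation then (correctly) uses $g^{k}B'g^{-k}$, so the inconsistency is only between the prose and the display. Straightening out that sign is all that is needed.
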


\section{Proof of Theorem \ref{thm:main-th}}\label{sec:Proof-main}
\subsection{Structure of the proof} Our main  proof-tool  is Lemma \ref{lm:charact:GwrZZZ}. So we need to define a data of  a ``natural'' epimorphism $\phi:\pi_1\mathcal{O}_f(f)\to \ZZZ$, an element $g$ from $\ker\phi$, and groups $L_i$, $i = 0,1,\ldots,n-1$ such in Lemma \ref{lm:charact:GwrZZZ}.
\subsection{Epimorphism $\phi$ and its kernel}
Let $V_i$ and $W_i$ be fixed $f$-regular neighborhoods of $C_i\in \mathcal{C}$, $i = 0,1,\ldots, n-1$ such in Subsection \ref{ss:f-adapter-neighborhood}.
\begin{proposition}[Theorem 6.1 \cite{MaksymenkoFeshchenko:2015:HomPropCycleNonTri}]\label{proposition:phi-kernel}
	There exists an epimorphism $\phi:\pi_1\mathcal{O}_f(f)\to \ZZZ$ with the kernel   isomorphic to $\pi_0\mathcal{S}'(f,\mathsf{W})$, i.e., the following  sequence of groups
	$$
	\xymatrix{
		\pi_0\mathcal{S}'(f,\mathsf{W})  \ar@{^{(}->}[r] & \pi_1\mathcal{O}_f(f) \ar@{->>}[r]^{~~\phi} & \ZZZ	
	}
	$$
	is exact.
\end{proposition}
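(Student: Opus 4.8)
The plan is to construct the epimorphism $\phi$ by combining the isomorphism $\varkappa$ from \eqref{eq:varkappa} with a "winding number" count along the longitude, and then to identify its kernel with $\pi_0\mathcal{S}'(f,\mathsf{W})$ by a direct comparison of the two fibration sequences corresponding to $X=\varnothing$ and $X=\mathsf{W}$. Concretely, a class in $\pi_1\mathcal{O}_f(f)\cong \pi_1(\mathcal{D}_{\id}(T^2),\mathcal{S}'(f))$ is represented by an isotopy $\{h_t\}$ with $h_0=\id$ and $h_1\in\mathcal{S}'(f)$; since the collection $\mathcal{C}=\{C_0,\dots,C_{n-1}\}$ of meridians is $\mathcal{S}'(f)$-invariant, $h_1$ permutes the curves $C_i$ cyclically, say $h_1(C_i)=C_{i+r}$ (indices mod $n$). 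One checks that the induced permutation of $\mathcal{C}$ depends only on the homotopy class, and lifting the path $\{h_t(C_0)\}$ of meridians to the universal cover $\RRR\times S^1$ of $T^2$ produces a well-defined integer "winding number" $\phi([\{h_t\}])$ that reduces mod $n$ to $r$. I would verify that $\phi$ is a homomorphism (concatenation of isotopies adds winding numbers), that it is surjective (the isotopy $\mathbf{L}$ of \eqref{eq:IsotopiesML}, viewed via $\varkappa$, maps to $\pm n$ or, after rescaling the rotation to sweep exactly one fundamental domain, to $1$), and that the construction is exactly the one recorded in \cite[Theorem 6.1]{MaksymenkoFeshchenko:2015:HomPropCycleNonTri}.

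The substantive part is identifying $\ker\phi$ with $\pi_0\mathcal{S}'(f,\mathsf{W})$. The idea: a class lies in $\ker\phi$ precisely when it is represented by an isotopy $\{h_t\}$ ending at an $h_1\in\mathcal{S}'(f)$ that fixes each $C_i$ setwise \emph{and} has zero winding, i.e.\ is deformable rel endpoints to an isotopy whose time-$1$ map fixes a neighborhood of $\mathsf{W}$ pointwise. Using part (2) of Theorem \ref{thm:homotopy-gneral}, $\mathcal{O}_f(f)\cong \mathcal{O}_f(f,\mathsf{W}\cup\partial T^2)=\mathcal{O}_f(f,\mathsf{W})$ on $\pi_k$ for $k\geq 1$ is \emph{not} literally true (that statement is for $X\cup\partial M$ with $X$ possibly empty), so instead I would run the naturality square: the inclusion $\mathcal{D}_{\id}(T^2,\mathsf{W})\hookrightarrow\mathcal{D}_{\id}(T^2)$ induces a map of the two principal fibrations $\zeta_f$, giving a ladder
$$
\xymatrix@C=1.6em{
\pi_1\mathcal{D}_{\id}(T^2,\mathsf{W}) \ar[r]\ar[d] & \pi_1\mathcal{O}_f(f,\mathsf{W}) \ar[r]^{\partial_1}\ar[d] & \pi_0\mathcal{S}'(f,\mathsf{W}) \ar[r]\ar[d] & \pi_0\mathcal{D}_{\id}(T^2,\mathsf{W})\ar[d]\\
\pi_1\mathcal{D}_{\id}(T^2) \ar[r]^{\zeta_1} & \pi_1\mathcal{O}_f(f) \ar[r]^{\partial_1} & \pi_0\mathcal{S}'(f) \ar[r] & \pi_0\mathcal{D}_{\id}(T^2).
}
$$
By part (4) of Theorem \ref{thm:homotopy-gneral} applied with $X=\mathsf{W}$ (here $\chi(T^2)=0<|\mathsf{W}|=\infty$, since an $f$-regular neighborhood contains regular leaves), the left map and the top-left group vanish: $\pi_1\mathcal{D}_{\id}(T^2,\mathsf{W})=0$ and $\partial_1:\pi_1\mathcal{O}_f(f,\mathsf{W})\xrightarrow{\ \cong\ }\pi_0\mathcal{S}'(f,\mathsf{W})$. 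It remains to show the middle vertical map $\pi_1\mathcal{O}_f(f,\mathsf{W})\to\pi_1\mathcal{O}_f(f)$ is injective with image exactly $\ker\phi$: injectivity follows because any null-homotopy in $\mathcal{O}_f(f)$ of a loop coming from $\mathcal{O}_f(f,\mathsf{W})$ lifts, via the covering-like behaviour of the restriction map on the cylindrical ends, to one in $\mathcal{O}_f(f,\mathsf{W})$ — equivalently, from $\pi_1\mathcal{D}_{\id}(T^2,\mathsf{W})=0$ and a diagram chase the kernel of $\partial_1\circ(\text{middle})$ is trivial — and the image equals $\ker\phi$ because $\phi$ is precisely the obstruction to isotoping the endpoint diffeomorphism to one supported away from $\mathsf{W}$ (a loop has $\phi=0$ iff it factors through $\mathcal{O}_f(f,\mathsf{W})$, by the winding-number description).

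\textbf{Main obstacle.} The delicate point is the exactness/injectivity of the middle column, i.e.\ showing that a loop in $\mathcal{O}_f(f)$ with $\phi=0$ really does come from a loop in $\mathcal{O}_f(f,\mathsf{W})$ and not merely from one fixing $\mathsf{W}$ setwise. This requires a parametrized isotopy argument: starting from $\{h_t\}$ with $h_1(C_i)=C_i$ and zero winding, one must deform the whole path (rel endpoints in $\mathcal{O}_f(f)$, i.e.\ up to composing with a path in $\mathcal{S}(f)$) so that every $h_t$ is the identity on a fixed neighborhood $\mathsf{V}\subset\mathsf{W}$ of the meridians. The tool is the flow $\mathbf{F}_t$ of the Hamiltonian-like vector field on $\mathsf{W}$ together with the function $\sigma$ from the construction of the slide $\theta$: one uses $\mathbf{F}_{s\sigma}$ to "unwind" the restriction of $h_t$ to the cylindrical ends while staying inside $\mathcal{S}(f)$, a step essentially carried out in \cite[\S5--6]{MaksymenkoFeshchenko:2015:HomPropCycleNonTri}. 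I would cite that computation rather than redo it, noting that the passage from Morse functions to functions with isolated singularities in $\mathcal{F}$ is harmless because $f$-regular neighborhoods and Hamiltonian-like flows behave identically, exactly as in Lemma \ref{lemma:Gamma_f-S^1}.
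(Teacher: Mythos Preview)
Your construction of $\phi$ via lifting the isotopy to the cover $\RRR\times S^1$ and reading off the shift of $\{i\}\times S^1$ is exactly what the paper does. The identification of the kernel, however, is organized differently. You set up the naturality ladder between the two fibrations and reduce everything to showing that the middle vertical map $\pi_1\mathcal{O}_f(f,\mathsf{W})\to\pi_1\mathcal{O}_f(f)$ is injective with image $\ker\phi$, which you then attack by a parametrized deformation of the \emph{whole} path $\{h_t\}$ using the Hamiltonian-like flow. The paper avoids this: given $[\{h_t\}]\in\ker\phi$, it only modifies the \emph{endpoint} $h_1$ inside $\mathcal{S}'(f)$, invoking \cite[Lemma 4.14]{Maksymenko:AGAG:2006} to isotope $h_1$ (within $\mathcal{S}'(f)$) to a diffeomorphism $h_1'$ fixed on $\mathsf{W}$; since relative $\pi_1$ allows the endpoint to slide in the subspace, this does not change the class. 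The remaining identification is then read off directly from the chain $\pi_1(\mathcal{D}_{\id}(T^2,\mathsf{W}),\mathcal{S}'(f,\mathsf{W}))\cong\pi_1\mathcal{O}_f(f,\mathsf{W})\cong\pi_0\mathcal{S}'(f,\mathsf{W})$ coming from Theorem~\ref{thm:homotopy-gneral}(4). So your ``main obstacle'' is genuine, but the paper sidesteps the path-level deformation by working only at the endpoint; what your route buys is a cleaner diagrammatic picture, at the cost of the extra parametrized argument you flag.

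Two small corrections. First, your surjectivity argument is slightly off: a ``rescaled $\mathbf{L}$'' sweeping one fundamental domain is $\mathbf{L}_{1/n}$, which is not a loop and need not end in $\mathcal{S}'(f)$ (it need not preserve $f$ off $\mathsf{W}$); the correct witness for $\phi=1$ is the element $g$ of Proposition~\ref{prop:phi-g-isotopy} (equivalently, take any $h\in\mathcal{S}'(f)$ with $h(C_0)=C_1$ and correct by a power of $\mathbf{L}$). Second, your diagram-chase for injectivity of the middle map is incomplete: knowing $\pi_1\mathcal{D}_{\id}(T^2,\mathsf{W})=0$ reduces the question to injectivity of $\pi_0\mathcal{S}'(f,\mathsf{W})\to\pi_0\mathcal{S}'(f)$, which is not automatic from the ladder alone and is precisely where either Lemma~4.14 or your Hamiltonian-flow unwinding must enter.
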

\begin{proof}
	This result is proved in \cite{MaksymenkoFeshchenko:2015:HomPropCycleNonTri}, but for completeness of our exposition  we will recall the construction of an epimorphism $\phi$.
	Let $q:\RRR\times S^1\to S^1\times S^1 =  T^2$ be a covering map given by  $q(x,y) = (\frac{x}{n}\mod 1,y)$. Then $q(\{i\}\times S^1) = C_{i\,\mod n}$, $i\in \ZZZ$ and $q^{-1}(\mathcal{C}) = \ZZZ\times S^1.$
	
	Let $\omega:[0,1]\to \mathcal{O}_f(f)$ be a loop and $h:T^2\times [0,1]\to T^2$ be an isotopy such that $\omega_t = f\circ h_t$ and $h_0 = \id_{T^2}$, $h_1\in\mathcal{S}'(f)$. 
	There exist an isotopy $\tilde{h}:(\RRR\times S^1)\times [0,1] = \RRR\times S^1$ such that $\tilde{h}_0 = \id_{\RRR\times S^1}$ and $q\circ \tilde{h}_t = h_t\circ q$ for all $t\in [0,1].$ Since $h_1(\mathcal{C}) = \mathcal{C}$ then from the definition of $q$ we have $\tilde{h}_1(\ZZZ\times S^1) = \ZZZ\times S^1$. Then there exists an integer $\phi_{h}$ such that 
	\begin{equation}\label{eq:phi}
		\tilde{h}_1(\{i\}\times S^1) = \{i + \phi_{h} \}\times S^1.
	\end{equation}
	The number $\phi_{h}$ depends only on the homotopy class of $h$,  so on the isotopy class of the loop $\omega$, and the correspondence 
	$$
	\xymatrix{
		[\omega]\ar[rr]^{\phi} && \phi_{h}
	}
	$$
	defined by \eqref{eq:phi}  is an epimorphism $\phi:\pi_1\mathcal{O}_f(f)\to \ZZZ$.
	
	The kernel of $\phi$ consists of homotopy classes of isotopies $h:T^2\times [0,1]\to T^2$ such that $h_1$ leaves invariant each curve $C_i$ from $\mathcal{C}$, i.e., $h_1(C_i) = C_i.$ It was shown \cite[Lemma 4.14]{Maksymenko:AGAG:2006}
	that each such $h_1$ can be isotoped in $\mathcal{S}'(f)$ to the diffeomorphism $h_1'$ which is fixed on an $f$-cylindrical neighborhood $\mathsf{W}$ of $\mathcal{C}$, so $h'_1\in \mathcal{S}'(f,\mathsf{W}).$ 
	By (4) of Theorem \ref{thm:homotopy-gneral} and equation \eqref{eq:varkappa} the composition 
	$$
	\xymatrix{
		\pi_1(\mathcal{D}_{\id}(T^2, \mathsf{W}), \mathcal{S}'(f,\mathsf{W})) \ar[r]^{~~~~~~~~~~~\varkappa^{-1}}_{~~~~~~~~~\cong} & \pi_1\mathcal{O}_f(f,\mathsf{W}) \ar[r]^{\partial}_{\cong} & \pi_0\mathcal{S}'(f,\mathsf{W})
	}
	$$
	is an isomorphism. So the kernel of $\phi$ is isomorphic to $\pi_0\mathcal{S}'(f,\mathsf{W}).$
\end{proof}

\subsection{Special isotopy and subgroups of $\ker\phi$} The following proposition holds true.
\begin{proposition}[Theorem 6.1 (c), \cite{MaksymenkoFeshchenko:2015:HomPropCycleNonTri}]\label{prop:phi-g-isotopy}
	There exist an isotopy $g:T^2\times [0,1]\to T^2$ satisfying the following conditions
	\begin{enumerate}
		\item $g_1\in\mathcal{S}'(f,\mathsf{W})$,
		\item $g_1^n = \id_{T^2}$,
		\item $g_1(Q_i) = Q_{i+1}$, $i = 0,1,\ldots, n-1$, 
		\item $\phi([f\circ g_t]) = 1$.
	\end{enumerate}
\end{proposition}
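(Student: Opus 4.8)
The plan is to build $g$ from three pieces: an isotopy extracted from the epimorphism $\phi$ of Proposition~\ref{proposition:phi-kernel} whose time-one map cyclically permutes $C_0,\dots,C_{n-1}$; a normalization of that map near $\mathsf W$; and a correction by a diffeomorphism supported in the single cylinder $Q_0$ that makes the time-one map have order exactly $n$ --- all arranged so that the traced loop keeps $\phi$-value $1$. (For $n=1$ one may take $g=\mathbf L$, so that $g_1=\id_{T^2}$; assume $n\ge 2$ below.) First, since $\phi\colon\pi_1\mathcal O_f(f)\to\ZZZ$ is onto, choose a loop $\omega$ at $f$ with $\phi([\omega])=1$ and, using that $\zeta_f$ is a fibration (Theorem~\ref{thm:homotopy-gneral}(1)), lift it to a path $g^{(0)}\colon[0,1]\to\mathcal D_{\id}(T^2)$ with $g^{(0)}_0=\id_{T^2}$, $f\circ g^{(0)}_t=\omega_t$, and $h_0:=g^{(0)}_1\in\mathcal S'(f)$. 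By the construction of $\phi$ recalled in the proof of Proposition~\ref{proposition:phi-kernel}, $\phi([\omega])=1$ means exactly that $h_0(C_i)=C_{i+1}$ for all $i$ modulo $n$, whence $h_0(Q_i)=Q_{i+1}$ since $Q_i$ is the component of $T^2\setminus\mathcal C$ bounded by $C_i$ and $C_{i+1}$. Working through $\phi$ here is what spares us from ruling out ``orientation-reversing'' permutations of $\mathcal C$ by hand.

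Next I would normalize $h_0$ near $\mathsf W$: since $h_0\in\mathcal S'(f)$, each $h_0(W_i)$ is an $f$-regular neighbourhood of $C_{i+1}$, so by uniqueness of $f$-regular neighbourhoods of a regular leaf (up to $f$-preserving isotopy) one can isotope $h_0$, through an isotopy inside $\mathcal S'(f)$ supported in an arbitrarily small neighbourhood of $\mathsf W$, to a diffeomorphism $h$ with $h(W_i)=W_{i+1}$ and with $h|_{\mathsf W}$ equal to the standard shift $(x,y)\mapsto(x+\tfrac1n\bmod 1,\,y)$ (no obstruction arises, as near each $C_i$ the circles $\{t\}\times S^1$ are regular level curves of $f$). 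Concatenating $g^{(0)}$ with this isotopy yields a path $g^{(1)}$ from $\id_{T^2}$ to $h$; the added leg is carried by $\zeta_f$ to the constant loop $f$, so the traced loop still has $\phi$-value $1$. In particular $h^{n}|_{\mathsf W}=\id$, so $h^{n}$ is $f$-preserving, isotopic to the identity, fixed on $\mathsf W$, and preserves each $Q_i$.

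The substance is the order-$n$ correction. Put $\delta:=(h^{-n})|_{Q_0}$, extended by the identity outside $Q_0$; then $\delta\in\mathcal S(f)$, $\delta$ is fixed on $\mathsf W$, and $\delta$ fixes $\partial Q_0$. Since $h$ maps $Q_0$ onto $Q_i$ via $h^{i}$ and commutes with $h^{-n}$, the restriction of $h^{-n}$ to $Q_i$ is $h^{i}\delta h^{-i}|_{Q_i}$; as the $Q_i$ are disjoint and together fill $T^2$ off $\mathcal C$, it follows that $h^{-n}=\prod_{i=0}^{n-1}h^{i}\delta h^{-i}$, the $i$-th factor supported in $Q_i$. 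The conjugates $h^{k}\delta h^{-k}$, $k=1,\dots,n$, then have pairwise disjoint supports and $h^{n}$ commutes with $\delta$, so expanding $(h\delta)^{n}$ and moving all the $h$'s to the left gives $(h\delta)^{n}=\bigl(\prod_{k=1}^{n}h^{k}\delta h^{-k}\bigr)h^{n}=h^{-n}h^{n}=\id_{T^2}$. It remains to check $g_1:=h\delta\in\mathcal D_{\id}(T^2)$, so that $g_1\in\mathcal S'(f)$. Because $h^{n}$ is fixed on $\mathsf W$, the diffeomorphism $\delta$ of the cylinder $Q_0$ is fixed near $\partial Q_0$ and therefore has a well-defined Dehn-twist coordinate $d\in\ZZZ\cong\pi_0\mathcal D(Q_0,\partial Q_0)$; each $h^{i}\delta h^{-i}$ has the same coordinate $d$ and is supported near the meridian $C_i$, and the curves $C_0,\dots,C_{n-1}$ are mutually isotopic in $T^2$, so the product $h^{-n}=\prod_i h^{i}\delta h^{-i}$ represents $nd$ times the meridian Dehn twist in $\pi_0\mathcal D(T^2)$. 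But $h^{-n}\in\mathcal D_{\id}(T^2)$ and the meridian Dehn twist has infinite order in $\pi_0\mathcal D(T^2)$, so $nd=0$, hence $d=0$; thus $\delta$ is isotopic to $\id_{T^2}$ through diffeomorphisms supported in $Q_0$, and in particular $\delta\in\mathcal D_{\id}(T^2)$.

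Finally I would take $g$ to be the concatenation of $g^{(1)}$ with $\{h\circ\delta_s\}_{s\in[0,1]}$, where $\delta_s$ is such an isotopy from $\id_{T^2}$ to $\delta$ supported in $Q_0$. Then $g_0=\id_{T^2}$, and $g_1=h\delta$ satisfies~(2) by the previous paragraph, (3) because $\delta(Q_0)=Q_0$ and $\delta$ is supported in $Q_0$ while $h(Q_i)=Q_{i+1}$, and (1) because $h,\delta\in\mathcal S'(f)$. For (4): along the added leg $f\circ(h\circ\delta_s)=(f\circ h)\circ\delta_s=f\circ\delta_s$, and since $\delta_s$ is supported in the proper subcylinder $Q_0$ it lifts, through the covering $q$ of Proposition~\ref{proposition:phi-kernel}, to an isotopy of $\RRR\times S^1$ whose time-one map fixes every curve $\{i\}\times S^1$; hence this leg contributes $0$ to $\phi$ and $\phi([f\circ g_t])=1+0=1$. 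I expect the order-$n$ correction to be the real obstacle --- and within it the vanishing of the Dehn-twist coordinate $d$, which is exactly what forces the correcting diffeomorphism $\delta$, hence $g_1$, into the identity component, and which is why the normalization of $h$ on $\mathsf W$ (making $d$ well defined) is needed at all.
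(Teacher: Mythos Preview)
Your argument is correct and follows the same overall strategy as the paper: produce an $f$-preserving diffeomorphism cyclically permuting the $Q_i$, normalize it on $\mathsf{W}$ to the standard shift $\mathbf{L}_{1/n}$, and arrange $\phi=1$. The differences are in execution rather than in idea. The paper obtains the initial cyclic shift directly from the definition of $\mathcal{C}$ as the $\mathcal{S}'(f)$-orbit of $C_0$, and for the crucial order-$n$ property $g_1^{\,n}=\id_{T^2}$ it simply refers to \cite[Theorem~6.1(c)]{MaksymenkoFeshchenko:2015:HomPropCycleNonTri} without further detail; your explicit correction $\delta=(h^{-n})|_{Q_0}$, the telescoping identity $(h\delta)^n=\id_{T^2}$, and the Dehn-twist argument forcing $d=0$ so that $\delta\in\mathcal{D}_{\id}(T^2)$ supply precisely what the paper outsources, and this is the real added value of your write-up. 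For condition~(4) the paper proceeds differently: it first fixes $g_1$, observes that any isotopy from $\id_{T^2}$ to $g_1$ has $\phi\equiv 1\pmod n$, and then post-composes the isotopy with $\mathbf{L}^{-a}$ to force $\phi=1$ without altering $g_1$; your route of starting from $\phi=1$ and checking that each correction leg contributes~$0$ is equally valid and arguably cleaner. One remark: condition~(1) as literally stated, $g_1\in\mathcal{S}'(f,\mathsf{W})$, is incompatible with~(3) for $n\ge 2$ and is a typo --- the paper's own proof makes $g_1=\mathbf{L}_{1/n}$ on $\mathsf{W}$, not the identity there --- and your $g_1=h\delta$ realizes this intended meaning.
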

\begin{proof}
	By definition of the set $\mathcal{C}$, there exists $g_1\in \mathcal{S}'(f)$ such that $g(Q_i) = Q_{i+1}.$ So (3) is obvious. To prove (1) and (2) we need to replace $g_1$ to $\sigma = \mathbf{L}_{1/n}$ on $\mathsf{W}$ in $\mathcal{S}'(f)$, where $\mathbf{L}$ is defined in \eqref{eq:IsotopiesML}; this was done in (c) of   \cite[Theorem 6.1]{MaksymenkoFeshchenko:2015:HomPropCycleNonTri}. The resulting diffeomorphism we also denote by $g_1$. 
	
	(4) Let $g:T^2\times [0,1]\to T^2$ be an isotopy between $g_1$ and $\id_{T^2}$. So $\phi([g]) = an+1$ for some $n\in \ZZZ.$ If $a\neq 0$ then we replace $[g_t]$ by $[g_t\circ \mathbf{L}_t^{-a}]$ in order to have $\phi([f\circ g_t]) = 1.$
\end{proof}

Denote by $X_i^-$ and $X_i^+$ the following intersections $Q_i\cap W_i$, $Q_i\cap  W_{i+1}$, and set $X_i = X_i^- \cup X_i^+$, $U_i = Q_i - X_i$ for $i = 0,1,\ldots, n-1$. 
The set $X_i$ is an $f$-adapted neighborhood of the boundary $\partial Q_i$ of the cylinder $Q_i\subset T^2$, see Figure \ref{fig:neighborhoods}.

\begin{figure}[h]
	\includegraphics[width=7cm]{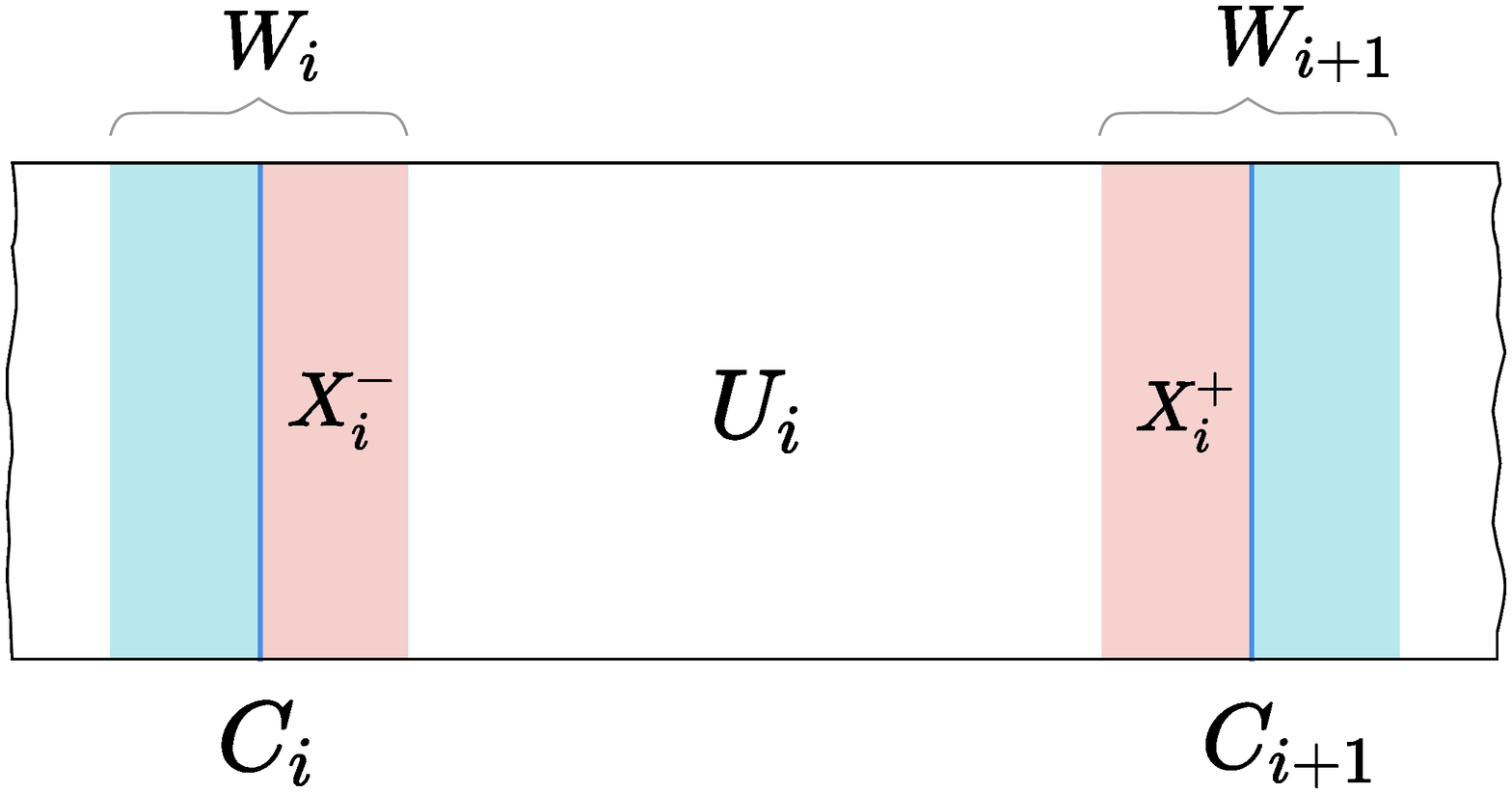}
	\caption{}
	\label{fig:neighborhoods}
\end{figure}

We denote by $L'_i$ the following subgroup of the kernel of $\phi$:
\begin{equation}\label{eq:lis}
	L'_i = \pi_0\mathcal{S}'(f, T^2 - U_i), \qquad i = 0,1,\ldots, n-1.
\end{equation}
Elements of $L'_i$ are isotopy classes of diffeomorphisms supported on $U_i$.
Let $g:T^2\times[0,1]\to T^2$ be an isotopy from Proposition \ref{prop:phi-g-isotopy}. Since $g_1$ satisfies (1)-(4) of Proposition \ref{prop:phi-g-isotopy}, it follows that 
$$
L'_i = [g_1^{-i}] L'_0 [g_1^{i}], \qquad i = 0,1,\ldots, n-1.
$$

Each diffeomorphism $h\in \mathcal{S}'(f,T^2-U_i)$ is fixed on $T^2-U_i$, so the restriction $h\to h|_{Q_i}$ induces an isomorphism
\begin{equation}\label{eq:rest}
	\beta_i:\pi_0\mathcal{S}'(f,T^2-U_i)\to \pi_0\mathcal{S}'(f|_{Q_i}, X_i)
\end{equation}
given by the  map $\beta_i([h]) = [h|_{Q_i}]$.  Let $h_i$ be a diffeomorphism from $\mathcal{S}'(f|_{Q_i}, X_i)$, and $h'_i$ be its extension to $T^2$ by the identity map. Then the inverse of $\beta_i$ is given by the rule: $\beta_i^{-1}([h_i]) = [h'_i].$
So we will not distinguish  groups $\pi_0\mathcal{S}'(f,T^2-U_i)$ and $\pi_0\mathcal{S}'(f|_{Q_i}, X_i)$,  and we set $L_i = \pi_0\mathcal{S}'(f|_{Q_i}, X_i).$

\subsection{The end of the proof}
The following lemma completes the proof.
\begin{lemma}\label{lemma:phi-lemma-main}
	The data of an epimorphism $\phi$ and the element $g$ from Proposition \ref{prop:phi-g-isotopy},  and groups $L_i =\pi_0\mathcal{S}'(f|_{Q_i}, X_i)$ from \eqref{eq:lis} satisfy conditions of Lemma \ref{lm:charact:GwrZZZ}. So  $\pi_1\mathcal{O}_f(f)$ is isomorphic to $\pi_0\mathcal{S}'(f|_{Q_0}, X_0)\wr_n\ZZZ$ and this isomorphism is given by the formula \eqref{eq:xi-isomorphism}.
\end{lemma}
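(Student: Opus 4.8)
The plan is to verify the three hypotheses of Lemma \ref{lm:charact:GwrZZZ} for the concrete data already assembled: the epimorphism $\phi:\pi_1\mathcal{O}_f(f)\to\ZZZ$ of Proposition \ref{proposition:phi-kernel}, the element $[f\circ g_t]$ with $\phi([f\circ g_t])=1$ coming from the isotopy $g$ of Proposition \ref{prop:phi-g-isotopy}, and the subgroup $L_0=\pi_0\mathcal{S}'(f|_{Q_0},X_0)\cong L_0'=\pi_0\mathcal{S}'(f,T^2-U_0)\subset\ker\phi$. Writing $g$ also for the class $[f\circ g_t]\in\pi_1\mathcal{O}_f(f)$ (so $g^{-1}L_0'g$ corresponds under $\varkappa$ to conjugation by $g_1$), we already know $L_i'=g^{-i}L_0'g^{i}$ and $L_i'\cong L_i$. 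Hence the two things left to check are: (i) $g^n$ commutes with $\ker\phi\cong\pi_0\mathcal{S}'(f,\mathsf{W})$; and (ii) $\ker\phi$ splits as the internal direct product $L_0'\times L_1'\times\cdots\times L_{n-1}'$. Once these hold, Lemma \ref{lm:charact:GwrZZZ} delivers the isomorphism $\xi:L_0\wr_n\ZZZ\to\pi_1\mathcal{O}_f(f)$ given by \eqref{eq:xi-isomorphism}, which is exactly the statement.

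For (i): by Proposition \ref{prop:phi-g-isotopy}(2), $g_1^n=\id_{T^2}$, so as a diffeomorphism $g_1^n$ is trivial, but as a \emph{loop} $\{f\circ g_t^n\}$ it need not be nullhomotopic in $\mathcal{O}_f(f)$ — however it lies in the image of $\pi_1\mathcal{D}_{\id}(T^2)$ under $\zeta_1$ (indeed $g^n=\varkappa[\{g_t^n\}]$ and $\{g_t^n\}$ is a loop in $\mathcal{D}_{\id}(T^2)$ since $g_1^n=\id$), and by Theorem \ref{thm:homotopy-gneral}(3) the image of $\pi_1\mathcal{D}_{\id}(T^2)$ lies in the center of $\pi_1\mathcal{O}_f(f)$; in particular $g^n$ commutes with everything, a fortiori with $\ker\phi$. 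So (i) is essentially immediate from the centrality statement already recorded.

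The real work is (ii), and this is the step I expect to be the main obstacle. One checks conditions (D1)--(D3) from subsection \ref{subsec:algebraic-preliminaries} for the subgroups $L_0',\dots,L_{n-1}'$ of $\ker\phi\cong\pi_0\mathcal{S}'(f,\mathsf{W})$. Condition (D1), $L_i'\cap L_j'=\{e\}$ for $i\neq j$: representatives of $L_i'$ are supported in $U_i\subset Q_i$ and these open sets are pairwise disjoint, so a class lying in both must be represented (up to isotopy fixing $\mathsf{W}$) by a diffeomorphism supported in $U_i\cap U_j=\varnothing$, hence trivial; one must be a little careful that isotopies, not just the diffeomorphisms, can be taken with disjoint supports, which follows from the product/restriction structure of $\mathcal{S}'$ on the $f$-adapted pieces $Q_i$ together with the injectivity of $\beta_i$ in \eqref{eq:rest}. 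Condition (D2), $L_i'L_j'=L_j'L_i'$: for $i\neq j$ the supports are disjoint so the representatives literally commute. Condition (D3), that $L_0',\dots,L_{n-1}'$ generate $\ker\phi$: given $[h]\in\ker\phi$ with $h_1\in\mathcal{S}'(f,\mathsf{W})$ (as arranged in the proof of Proposition \ref{proposition:phi-kernel}), one decomposes $h_1$ up to isotopy as a product $\prod_i h_1^{(i)}$ with $h_1^{(i)}$ supported in $U_i$ — this is the crux, and it uses that $T^2=\mathsf{W}\cup\bigcup_i U_i$, that $h_1$ is fixed on $\mathsf{W}$, and that any $f$-preserving diffeomorphism of $T^2$ fixed near the separating meridians $C_i$ restricts on each cylinder $Q_i$ to an element of $\mathcal{S}'(f|_{Q_i},X_i)$, then extends back by the identity. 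I would present the argument as: restrict $h_1$ to each $Q_i$ to get $h_1|_{Q_i}\in\mathcal{S}'(f|_{Q_i},X_i)$, let $h_1^{(i)}$ be its extension by $\id$, and observe $h_1=\prod_i h_1^{(i)}$ since the $Q_i$ cover $T^2$ and overlap only in $\mathsf{W}$ where $h_1=\id$; hence $[h]=\prod_i[h_1^{(i)}]\in L_0'\cdots L_{n-1}'$. Combining (D1)--(D3) gives $\ker\phi=L_0'\times\cdots\times L_{n-1}'$ internally, which is hypothesis (2) of Lemma \ref{lm:charact:GwrZZZ}; together with (i) this finishes the proof via the explicit isomorphism \eqref{eq:xi-isomorphism}. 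The one genuinely delicate point, deserving careful wording, is that all of these support-manipulations must be carried out compatibly with the identification $\ker\phi\cong\pi_0\mathcal{S}'(f,\mathsf{W})$ and with the conjugation action of $g$, i.e.\ one should phrase (D1)--(D3) directly inside $\pi_0\mathcal{S}'(f,\mathsf{W})$ using the isomorphisms $\beta_i$, rather than pass back and forth to $\pi_1\mathcal{O}_f(f)$ repeatedly.
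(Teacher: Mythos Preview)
Your proposal is correct and follows essentially the same route as the paper: verify condition (1) of Lemma~\ref{lm:charact:GwrZZZ} via $g_1^n=\id_{T^2}$, then check (D1)--(D3) for the $L_i'$ using disjointness of the supports $U_i$ and the restriction/extension decomposition $[h]=\prod_i[h|_{Q_i}]$. Your treatment of condition (1) is in fact more careful than the paper's: the paper simply says ``$g_1^n=\id_{T^2}$, and so it commutes with $\ker\phi$,'' whereas you correctly observe that as an element of $\pi_1\mathcal{O}_f(f)$ the class $g^n$ need not be trivial but lies in $\zeta_1(\pi_1\mathcal{D}_{\id}(T^2))$, which is central by Theorem~\ref{thm:homotopy-gneral}(3); this is the right way to phrase it.
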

\begin{proof}
	Let $g:T^2\times [0,1]\to T^2$ is an isotopy defined from Proposition \ref{prop:phi-g-isotopy}. By (2) of Proposition \ref{prop:phi-g-isotopy} the diffeomorphism $g^n_1 = \id_{T^2}$, and so it commutes with $\ker \phi.$ Thus (1) of Lemma \ref{lm:charact:GwrZZZ} holds true. 
	
	To verify (2) of  Lemma \ref{lm:charact:GwrZZZ} we need to check that the  three conditions (D1)--(D3) from subsection \ref{subsec:algebraic-preliminaries}  satisfied:
	\begin{enumerate}
		\item $L_i\cap L_j = [\id_{T^2}]$, $i\neq j$
		\item $L_iL_j = L_jL_i$,
		\item groups $L_0$, $L_1,\ldots, L_{n-1}$ generate $\ker\phi$,
	\end{enumerate}
	for all $i,j = 0,1,\ldots, n-1$. Conditions (1) and (2) follows from the fact that $\supp(h_i)\cap \supp(h_j) = \varnothing$ for $[h_i]\in  L_i$, $[h_j]\in L_j$, $i\neq j = 0,1,\ldots, n-1$.

	Let $[h]$ belongs to $\ker\phi.$ Then $h|_{Q_i}\in \mathcal{S}'(f|_{Q_i},X_i)$ and there is a unique  decomposition
	$$
	[h] = [h|_{Q_0}][h|_{Q_1}]\ldots [h|_{Q_{n-1}}].
	$$
	From the other hand let $h_i$ be a diffeomorphism from $\mathcal{S}'(f|_{Q_i}, X_i),$ $i = 0,1,\ldots, n-1$. Then 
	$$
	\beta_0^{-1}([h_0])\circ \beta_1^{-1}([h_1])\circ \ldots \circ \beta_{n-1}^{-1}([h_{n-1}])
	$$
	represents an element of $\ker\phi.$ So (3) is true i.e.,  groups $L_0,\,L_1,\ldots,L_{n-1}$ generate $\ker\phi.$
\end{proof}

\section{The kernel of $\phi$ and an isotopy $\mathbf{M}$}\label{sec:isitopyM}
Note that the group $\langle\mathbf{M}\rangle$ contains in the kernel of $\phi$, but to prove Theorem \ref{thm:main-th} we do not need the explicit form of the element of $\ker\phi$ representing $\mathbf{M}.$ 
Let $\mathsf{V}$ and $\mathsf{W}$ be fixed in subsection \ref{ss:f-adapter-neighborhood} $f$-regular neighborhoods of $\mathcal{C}.$
Next proposition describes the relationship between $\ker\phi$ and the group $\langle\mathbf{M}\rangle$. 
\begin{proposition}\label{theorem:kernel-phi-M}
	There is an isomorphism $\pi_0\mathcal{S}'(f,\mathsf{W})\cong \langle \mathbf{M} \rangle\times \pi_0\mathcal{S}'(f,\mathsf{V})$.
\end{proposition}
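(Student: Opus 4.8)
The plan is to realize the isomorphism via the inclusion-induced maps and an explicit splitting, using the characterization of direct products (conditions (D1)--(D3) from subsection~\ref{subsec:algebraic-preliminaries}) for $n=2$. First I would identify the two subgroups of $G := \pi_0\mathcal{S}'(f,\mathsf{W})$ inside which everything will live. Since $\mathsf{V}\subset\mathrm{Int}(\mathsf{W})$ and $\mathsf{W}$ is $\mathcal{S}'(f)$-invariant, a diffeomorphism fixed on $\mathsf{W}$ is in particular fixed on $\mathsf{V}$, so the inclusion $\mathcal{S}'(f,\mathsf{W})\hookrightarrow\mathcal{S}'(f,\mathsf{V})$ induces a homomorphism $\iota_*:\pi_0\mathcal{S}'(f,\mathsf{W})\to\pi_0\mathcal{S}'(f,\mathsf{V})$; the point will be that $\iota_*$ is \emph{surjective}, which follows from the standard fact (as in the proof of Proposition~\ref{proposition:phi-kernel}, via \cite[Lemma~4.14]{Maksymenko:AGAG:2006}) that any $f$-preserving diffeomorphism which is already fixed on the curves $\mathcal{C}$ can be isotoped rel $\mathcal{C}$ to one fixed on a whole $f$-regular neighborhood. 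The kernel of $\iota_*$ is the set of isotopy classes of diffeomorphisms fixed on $\mathsf{W}$ that become isotopic to the identity once we only require fixing $\mathsf{V}$; I would show this kernel is exactly the infinite cyclic group generated by $\mathbf{M}$ (more precisely by the class in $\pi_0\mathcal{S}'(f,\mathsf{W})$ of the time-one map of a suitable rotation along meridians supported in $\mathsf{W}-\mathsf{V}$, which is the natural representative of $\mathbf{M}$ here, cf.\ the construction of $\theta$ and $\sigma$ in subsection~\ref{ss:f-adapter-neighborhood}).

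Concretely, the key steps in order are: (i) establish the short exact sequence
\[
\langle\mathbf{M}\rangle\ \hookrightarrow\ \pi_0\mathcal{S}'(f,\mathsf{W})\ \twoheadrightarrow\ \pi_0\mathcal{S}'(f,\mathsf{V}),
\]
where the first map sends the generator to the meridian-rotation class; surjectivity is the "spreading the support" argument above, and exactness in the middle amounts to: a diffeomorphism fixed on $\mathsf{W}$ that is isotopic rel $\mathsf{V}$ to $\id$ is isotopic rel $\mathsf{W}$ to a power of $\mathbf{M}$. This last claim I would reduce to a computation on the cylindrical pieces $W_i-V_i$, each of which is a cylinder on which $f$ has no critical points, so the relevant mapping-class / isotopy computation is the classical one for the annulus: the difference between "rel boundary of $W_i$" and "rel the inner curve only" is measured by a twisting/rotation integer, and on $T^2$ these local contributions assemble to the single integer detecting a power of $\mathbf{M}$. (ii) Construct a splitting $s:\pi_0\mathcal{S}'(f,\mathsf{V})\to\pi_0\mathcal{S}'(f,\mathsf{W})$: given $[h]\in\pi_0\mathcal{S}'(f,\mathsf{V})$, first isotope $h$ rel $\mathsf{V}$ to $h'$ fixed on $\mathsf{W}$ (possible by surjectivity), and check this is well-defined by showing the ambiguity lies in $\langle\mathbf{M}\rangle\cap(\text{image of }s)$, which is trivial by construction because $\mathbf{M}$ is supported in $\mathsf{W}-\mathsf{V}$ and we can always normalize the chosen lift to have no such twist; alternatively, and more cleanly, observe that the extension-by-the-identity trick analogous to \eqref{eq:rest} gives a genuine section. (iii) Verify (D1): $\langle\mathbf{M}\rangle\cap s(\pi_0\mathcal{S}'(f,\mathsf{V}))=\{e\}$, which is immediate since a nontrivial power of $\mathbf{M}$ maps to a nontrivial element of $\pi_0\mathcal{S}'(f,\mathsf{V})$ under $\iota_*$ while $s$-images map isomorphically, or directly from the exact sequence. (iv) Verify (D2): $\mathbf{M}$ commutes with everything in $s(\pi_0\mathcal{S}'(f,\mathsf{V}))$ because its representative is supported in $\mathsf{W}-\mathsf{V}$ and is a meridian rotation there, so it commutes with any diffeomorphism respecting the product structure on that annulus — and we may choose the section so its representatives are products of diffeomorphisms supported on the $U_i$ and on $\mathsf{V}$, each disjoint from or commuting with the meridian rotation. (v) Verify (D3): generation is exactly surjectivity of $\iota_*$ together with the fact that $\langle\mathbf{M}\rangle$ is the kernel. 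Then Lemma-style splitting criterion (D1)--(D3) yields the direct product decomposition.

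The main obstacle I expect is step~(i), specifically pinning down $\ker\iota_*$ precisely as $\langle\mathbf{M}\rangle$ rather than something a priori larger. One must rule out that some nontrivial mapping class fixed on $\mathsf{W}$ could become trivial rel $\mathsf{V}$ for a "non-meridian" reason: the Dehn twists $\tau_i$ along the $C_i$ are supported in $\mathsf{W}$ and are \emph{not} in $\mathcal{S}'(f,\mathsf{W})$ if $f$ is nonconstant near $C_i$? — actually they are fixed near $\partial W_i$ so they do lie in $\mathcal{S}'(f,\mathsf{W})$, and one must check their classes either die or are absorbed; this is where the geometry of $f$ on the cylindrical pieces (the flow $\mathbf{F}$, the function $\sigma$, the relation $\theta^k=\mathbf{F}_{k\sigma}$) does the work, showing that any such twist is, rel $\mathsf{V}$, isotopic to a meridian rotation since the slides $\theta_i$ (supported on $W_i-V_i$) realize the transition. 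So the honest content is a careful annulus-by-annulus analysis of $\pi_0$ of $f$-preserving diffeomorphisms with two nested boundary conditions, after which the five verifications are routine. I would lean on \cite[Lemma~5.2]{MaksymenkoFeshchenko:2015:HomPropCycleNonTri} and \cite[Lemma~4.14]{Maksymenko:AGAG:2006} to keep this short.
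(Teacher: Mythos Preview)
Your overall strategy—identify $\mathbf{M}$ with the product of slides $\theta$, then verify (D1)--(D3)—is exactly the paper's. But your short-exact-sequence framing has a concrete directional error that makes step~(i) incoherent as written. You place $\langle\mathbf{M}\rangle$ inside $\pi_0\mathcal{S}'(f,\mathsf{W})$ and describe its generator as ``a suitable rotation along meridians supported in $\mathsf{W}-\mathsf{V}$''. But a diffeomorphism supported in $\mathsf{W}-\mathsf{V}$ is \emph{not} fixed on $\mathsf{W}$; it is fixed on $\mathsf{V}$ (and on $T^2-\mathsf{W}$). So $\theta\in\mathcal{S}'(f,\mathsf{V})$, not $\mathcal{S}'(f,\mathsf{W})$, and $\langle\theta\rangle$ cannot be the kernel of $\iota_*:\pi_0\mathcal{S}'(f,\mathsf{W})\to\pi_0\mathcal{S}'(f,\mathsf{V})$. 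The same confusion reappears in your final paragraph when you say the Dehn twists $\tau_i$ ``are supported in $\mathsf{W}$\dots so they do lie in $\mathcal{S}'(f,\mathsf{W})$''—support in $\mathsf{W}$ is precisely the opposite of being fixed on $\mathsf{W}$. Your step~(iii) then contradicts your step~(i): if $\langle\mathbf{M}\rangle=\ker\iota_*$, a nontrivial power of $\mathbf{M}$ maps to the \emph{trivial} element under $\iota_*$, not a nontrivial one.

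The paper does not go through an exact sequence at all. It regards $\langle\theta\rangle$ and (the image of) $\pi_0\mathcal{S}'(f,\mathsf{W})$ as two subgroups of the ambient group, with supports in $\mathsf{W}-\mathsf{V}$ and $T^2-\mathsf{W}$ respectively; disjoint supports give (D1)--(D2) immediately. For (D3) the paper builds the decomposition $[h]=[\theta^{k(h)}]\cdot[h']$ directly: write $h|_{\mathsf{W}}=\mathbf{F}_\alpha$ using the Hamiltonian-like flow, observe that on each $V_i$ (where $h$ is the identity and periods equal $1$) $\alpha$ is a constant integer $k_i(h)$, and then use that $h|_{Q_i}$ is isotopic to the identity to force $k_{i+1}(h)-k_i(h)=0$, so all $k_i$ coincide; then $H^t=\mathbf{F}^{-1}_{tk(h)\sigma}\circ h$ is an explicit isotopy to an $h'$ fixed on all of $\mathsf{W}$. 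Your ``annulus-by-annulus analysis'' is this argument in outline, but you would need to reorient which group contains which before it can be carried out.
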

\begin{proof}
	In \cite{MaksymenkoFeshchenko:2015:HomPropCycleNonTri} we showed that the isotopy class of  $\mathbf{M}$ can be represented as the isotopy class of $\theta = \theta_0\circ \theta_1\circ \ldots\circ \theta_{n-1}$, where $\theta_i$ is a slide along $C_i$ supported in $W_i-V_i$, $i = 0,1,2\ldots, n-1.$ 
	
	To prove this proposition we need to check that conditions (D1)--(D3) from subsection \ref{subsec:algebraic-preliminaries} hold for subgroups $\langle\theta\rangle$ and $\pi_0\mathcal{S}'(f,\mathsf{V})$ of $\pi_0\mathcal{S}'(f,\mathsf{W}).$ Conditions (D1) and (D2) obviously hold since $\supp(\theta)\cap \supp(h) = \varnothing$, where $h\in \mathcal{S}'(f,\mathsf{V}).$
	So, it remains  to show that for each $h$ in $\mathcal{S}'(f,\mathsf{W})$ there is a unique decomposition 
	\begin{equation}\label{eq:h-decomposition}
		[h] = [\theta^{k(h)}]\circ [h'],
	\end{equation}
	where $h'\in\mathcal{S}'(f,\mathsf{V})$ and $k(h)\in\ZZZ.$ To define this decomposition we use the same arguments such in \cite[Theorem 5.5]{Feshchenko:DefFuncI:2019}.
	
	Let $h$ be a diffeomorphism from  $\mathcal{S}'(f,\mathsf{W}).$ Since $f$ is fixed on $\mathsf{W}$, it follows from  \cite[Lemma 6.1]{Maksymenko:DefFuncI:2014}, there exists a smooth function $\alpha:\mathsf{W}\to \RRR$ such that $h = \mathbf{F}_{\alpha}.$ The restriction of $h$ and $\alpha$ onto $W_i$ are denoted by $h_i$ and $\alpha_i$ respectively. Since periods of all trajectories of $\mathbf{F}|_{\mathsf{W}}$ is equal to $1$, it follows that $\alpha_i$ takes an integer number $k_i(h)\in\ZZZ$, $x\in W_i.$
	The diffeomorphism $h|_{Q_i}$ is isotopic relative $\mathsf{W}\cap Q_i$ to a Dehn twist $\tau^{a_i}$ supported on $\mathsf{W}\cap Q_i$, where $a_i = \alpha(C_{i+1})-\alpha(C_i)$, $i = 0,1,\ldots, n-1$. Since $h\in\mathcal{S}'(f,\mathsf{W})$, it follows that $h|_{Q_i}$ is isotopic to the identity map of $Q_i.$ Hence $a_i = \alpha(C_{i+1})-\alpha(C_i) = k_{i+1}(h) - k_i(h) = 0.$ Then numbers  $k_{i}(h)$ pairwise  equal for $i = 0,1,\ldots, n-1$, so they all coincide, i.e. $k_i(h) = k(h).$ 
	
	Define an isotopy $H^t:T^2\to T^2$ between $h$ and $\theta^{-k(h)}\circ h$ by the formula
	$$
	H^t(h) = \mathbf{F}^{-1}_{tk(h)\sigma}\circ h.
	$$
	A diffeomorphism $H^t(h)$ is fixed on $\mathsf{V}$ for all $t\in [0,1].$ Then we have the following decomposition
	$$
	[h] = [\theta^{k(h)}]\circ [\theta^{-k(h)}\circ h] =[\theta^{k(h)}] \circ  [H^1(h)]
	$$
	which coincides with \eqref{eq:h-decomposition} for $h' = H^1(h).$
\end{proof}

\bibliographystyle{plain}

\begin{thebibliography}{10}
	
	\bibitem{ChurchTimourian}
	P.~T. Church and J.~G. Timourian.
	\newblock {Differentiable open maps of $(p+1)$-manifold to $p$-manifold.}
	\newblock {\em Pacific Journal of Mathematics}, 48(1):35 -- 45, 1973.
	
	\bibitem{RezendeLedesmaatallL2018}
	Ketty~A. {de Rezende}, Guido~G.E. Ledesma, Oziride Manzoli-Neto, and Gioia~M.
	Vago.
	\newblock Lyapunov graphs for circle valued functions.
	\newblock {\em Topology and its Applications}, 245:62--91, 2018.
	
	\bibitem{EarleEells:BAMS:1967}
	C.~J. Earle and J.~Eells.
	\newblock The diffeomorphism group of a compact {R}iemann surface.
	\newblock {\em Bull. Amer. Math. Soc.}, 73:557--559, 1967.
	
	\bibitem{EarleEells:DG:1970}
	C.~J. Earle and J.~Eells.
	\newblock A fibre bundle description of {T}eichm\"uller theory.
	\newblock {\em J. Differential Geometry}, 3:19--43, 1969.
	
	\bibitem{EarleSchatz:DG:1970}
	C.~J. Earle and A.~Schatz.
	\newblock {T}eichm\"uller theory for surfaces with boundary.
	\newblock {\em J. Differential Geometry}, 4:169--185, 1970.
	
	\bibitem{Feshchenko:DefFuncI:2019}
	Bohdan Feshchenko.
	\newblock Deformations of functions on $2$-torus.
	\newblock {\em Proceedings of the International Geometry Center}, 12(3):30--50,
	2020.
	
	\bibitem{Feshchenko:2014:HomPropTreeNonTri}
	B.~Feshhenko.
	\newblock Deformations of smooth functions on $2$-torus, whose {KR}-graph is a
	tree.
	\newblock {\em Proceedings of Institute of Mathematics of NAS of Ukraine},
	12(6):22--40, 2015.
	
	\bibitem{FukayaOhOhtaOno:2009AMS}
	Kenji Fukaya, Yong-Geun Oh, Hiroshi Ohta, and Kaoru Ono.
	\newblock {\em Lagrangian intersection {F}loer theory: anomaly and obstruction.
		{P}art {I}}, volume~46 of {\em AMS/IP Studies in Advanced Mathematics}.
	\newblock American Mathematical Society, Providence, RI; International Press,
	Somerville, MA, 2009.
	
	\bibitem{Gelbukh2009OnTS}
	Irina Gelbukh.
	\newblock On the structure of a {M}orse form foliation.
	\newblock {\em Czechoslovak Mathematical Journal}, 59:207--220, 2009.
	
	\bibitem{Gramain:ASENS:1973}
	Andr{\'e} Gramain.
	\newblock Le type d'homotopie du groupe des diff\'eomorphismes d'une surface
	compacte.
	\newblock {\em Ann. Sci. \'Ecole Norm. Sup. (4)}, 6:53--66, 1973.
	
	\bibitem{Hatcher:2002:Cambridge}
	Allen Hatcher.
	\newblock {\em Algebraic topology}.
	\newblock Cambridge University Press, Cambridge, 2002.
	
	\bibitem{HutchingsLee:1999:GT}
	Michael Hutchings and Yi-Jen Lee.
	\newblock Circle-valued {M}orse theory and {R}eidemeister torsion.
	\newblock {\em Geom. Topol.}, 3:369--396, 1999.
	
	\bibitem{HutchingsLee:1999:T}
	Michael Hutchings and Yi-Jen Lee.
	\newblock Circle-valued {M}orse theory, {R}eidemeister torsion, and
	{S}eiberg-{W}itten invariants of {$3$}-manifolds.
	\newblock {\em Topology}, 38(4):861--888, 1999.
	
	\bibitem{IkegamiSaeki:JMSJap:2003}
	Kazuichi Ikegami and Osamu Saeki.
	\newblock Cobordism group of {M}orse functions on surfaces.
	\newblock {\em J. Math. Soc. Japan}, 55(4):1081--1094, 2003.
	
	\bibitem{Kalmar:KJM:2005}
	Boldizs{\'a}r Kalm{\'a}r.
	\newblock Cobordism group of {M}orse functions on unoriented surfaces.
	\newblock {\em Kyushu J. Math.}, 59(2):351--363, 2005.
	
	\bibitem{KravchenkoFeshchenko:2020:MFAT}
	Anna Kravchenko and Bohdan Feshchenko.
	\newblock Automorphisms of {K}ronrod-{R}eeb graphs of {M}orse functions on
	2-torus.
	\newblock {\em Methods Funct. Anal. Topology}, 26(1):88--96, 2020.
	
	\bibitem{Kudryavtseva:MatSb:1999}
	E.~A. Kudryavtseva.
	\newblock Realization of smooth functions on surfaces as height functions.
	\newblock {\em Mat. Sb.}, 190(3):29--88, 1999.
	
	\bibitem{Kudryavtseva:MathNotes:2012}
	E.~A. Kudryavtseva.
	\newblock The topology of spaces of {M}orse functions on surfaces.
	\newblock {\em Math. Notes}, 92(1-2):219--236, 2012.
	\newblock Translation of Mat. Zametki {{\bf{9}}2} (2012), no. 2, 241--261.
	
	\bibitem{Kudryavtseva:MatSb:2013}
	E.~A. Kudryavtseva.
	\newblock On the homotopy type of spaces of {M}orse functions on surfaces.
	\newblock {\em Mat. Sb.}, 204(1):79--118, 2013.
	
	\bibitem{MaksymenkoFeshchenko:2014:HomPropTree}
	S.~Maksymenko and B.~Feshchenko.
	\newblock Homotopy properties of spaces of smooth functions on$2$-torus.
	\newblock {\em Ukrainian Mathematical Journal}, 66(9):1205--1212, 2014.
	
	\bibitem{Maksymenko:AGAG:2006}
	Sergiy Maksymenko.
	\newblock Homotopy types of stabilizers and orbits of {M}orse functions on
	surfaces.
	\newblock {\em Ann. Global Anal. Geom.}, 29(3):241--285, 2006.
	
	\bibitem{Maksymenko:Travaux:2007}
	Sergiy Maksymenko.
	\newblock Homotopy dimension of orbits of {M}orse functions on surfaces.
	\newblock {\em Travaux Mathematiques}, 18:39--44, 2008.
	
	\bibitem{Maksymenko:MFAT:2010}
	Sergiy Maksymenko.
	\newblock Functions on surfaces and incompressible subsurfaces.
	\newblock {\em Methods Funct. Anal. Topology}, 16(2):167--182, 2010.
	
	\bibitem{Maksymenko:ProcIM:ENG:2010}
	Sergiy Maksymenko.
	\newblock Functions with isolated singularities on surfaces.
	\newblock {\em Geometry and topology of functions on manifolds. Pr. Inst. Mat.
		Nats. Akad. Nauk Ukr. Mat. Zastos.}, 7(4):7--66, 2010.
	
	\bibitem{Maksymenko:OsakaJM:2011}
	Sergiy Maksymenko.
	\newblock Local inverses of shift maps along orbits of flows.
	\newblock {\em Osaka Journal of Mathematics}, 48(2):415--455, 2011.
	
	\bibitem{Maksymenko:UMZ:ENG:2012}
	Sergiy Maksymenko.
	\newblock Homotopy types of right stabilizers and orbits of smooth functions
	functions on surfaces.
	\newblock {\em Ukrainian Math. Journal}, 64(9):1186--1203, 2012.
	
	\bibitem{Maksymenko:2021:review}
	Sergiy Maksymenko.
	\newblock Deformations of functions on surfaces.
	\newblock {\em Proceedings of Institute of Mathematics of NAS of Ukraine},
	17(2):150--199, 2020.
	
	\bibitem{Maksymenko:DefFuncI:2014}
	Sergiy Maksymenko.
	\newblock Deformations of functions on surfaces by isotopic to the identity
	diffeomorphisms.
	\newblock {\em Topology and its Applications}, 282(2):107312, 2020.
	
	\bibitem{MaksymenkoFeshchenko:2015:HomPropCycleNonTri}
	Sergiy Maksymenko and Bohdan Feshchenko.
	\newblock Functions on $2$-torus whose {K}ronrod-{R}eeb graph contains a cycle.
	\newblock {\em Methods of Functional Analysis and Topology}, 21(1):22--40,
	2015.
	
	\bibitem{MaksymenkoFeshchenko:2014:HomPropCycle}
	Sergiy Maksymenko and Bohdan Feshchenko.
	\newblock Orbits of smooth functions on $2$--torus and their homotopy types.
	\newblock {\em Matematychni Studii}, 44(1):67--83, 2015.
	
	\bibitem{Meldrum:Longman:1995}
	J.~D.~P. Meldrum.
	\newblock {\em Wreath products of groups and semigroups}, volume~74 of {\em
		Pitman Monographs and Surveys in Pure and Applied Mathematics}.
	\newblock Longman, Harlow, 1995.
	
	\bibitem{Milnor:MorseTheory}
	J.~Milnor.
	\newblock {\em Morse Theory. (AM-51), Volume 51}.
	\newblock Princeton University Press, 1969.
	
	\bibitem{Novikov:1981:DAN}
	S.~P. Novikov.
	\newblock Multivalued functions and functionals. {A}n analogue of the {M}orse
	theory.
	\newblock {\em Dokl. Akad. Nauk SSSR}, 260(1):31--35, 1981.
	
	\bibitem{Novikov:1982:UMN}
	S.~P. Novikov.
	\newblock The {H}amiltonian formalism and a multivalued analogue of {M}orse
	theory.
	\newblock {\em Uspekhi Mat. Nauk}, 37(5(227)):3--49, 248, 1982.
	
	\bibitem{Pajitnov:2006:Gruyter}
	Andrei~V. Pajitnov.
	\newblock {\em Circle-valued {M}orse theory}, volume~32 of {\em De Gruyter
		Studies in Mathematics}.
	\newblock Walter de Gruyter \& Co., Berlin, 2006.
	
	\bibitem{Prishlyak:2002}
	A.O. Prishlyak.
	\newblock Topological equivalence of smooth functions with isolated critical
	points on a closed surface.
	\newblock {\em Topology and its Applications}, 119(3):257--267, 2002.
	
	\bibitem{Ranicki:1995:T}
	Andrew Ranicki.
	\newblock Finite domination and {N}ovikov rings.
	\newblock {\em Topology}, 34(3):619--632, 1995.
	
	\bibitem{Reeb:ASI:1952}
	Georges Reeb.
	\newblock {\em Sur certaines propri\'et\'es topologiques des vari\'et\'es
		feuillet\'ees}.
	\newblock Actualit\'es Sci. Ind., no. 1183. Hermann \& Cie., Paris, 1952.
	\newblock Publ. Inst. Math. Univ. Strasbourg 11, pp. 5--89, 155--156.
	
	\bibitem{Sergeraert:ASENS:1972}
	Francis Sergeraert.
	\newblock Un th\'eor\`eme de fonctions implicites sur certains espaces de
	{F}r\'echet et quelques applications.
	\newblock {\em Ann. Sci. \'Ecole Norm. Sup. (4)}, 5:599--660, 1972.
	
	\bibitem{Sharko:PrIntMat:1998}
	V.~V. Sharko.
	\newblock Functions on surfaces. {I}.
	\newblock In {\em Some problems in contemporary mathematics ({R}ussian)},
	volume~25 of {\em Pr. Inst. Mat. Nats. Akad. Nauk Ukr. Mat. Zastos.}, pages
	408--434. Nats\={i}onal. Akad. Nauk Ukra\"{i}ni, \={I}nst. Mat., Kiev, 1998.
	
	\bibitem{VeberPazhitnovRudolf:2001:PANAA}
	K.~Veber, A.~Pazhitnov, and L.~Rudolf.
	\newblock The {M}orse-{N}ovikov number for knots and links.
	\newblock {\em Algebra i Analiz}, 13(3):105--118, 2001.
	
\end{thebibliography}

\end{document}